\setlist[enumerate,1]{label=\textup{(\alph*)},
ref={\alph*}, align=left, labelsep=0.5ex, leftmargin=*}
\setlist[enumerate,2]{label=\textup{({\roman*})},
ref={\roman*}, align=right, labelsep*=1ex, widest={(ii)},  
leftmargin=5.4ex}
\DeclareSymbolFont{bbold}{U}{bbold}{m}{n}
\DeclareMathSymbol{\bbS}{\mathord}{bbold}{83}      
\theoremstyle{plain}
\newtheorem{theorem}{Theorem}[section]
\newtheorem{corollary}{Corollary}[section]
\newtheorem{lemma}{Lemma}[section]
\newtheoremstyle{boldremex}
    {\dimexpr\topsep/2\relax} 
    {\dimexpr\topsep/2\relax} 
    {}          
    {}          
    {\bfseries} 
    {.}         
    {.5em}      
    {}          
\theoremstyle{boldremex}
\newtheorem{remark}{Remark}[section]
\renewenvironment{proof}[1][\proofname]{%
   \par\pushQED{\qed}\normalfont%
   \topsep6\p@\@plus6\p@\relax
   \trivlist\item[\hskip\labelsep\bfseries#1\@addpunct{.}]%
   \ignorespaces
}{%
   \popQED\endtrivlist\@endpefalse
}
\newcommand{\CC}{\mathbb{C}}
\newcommand{\RR}{\mathbb{R}}
\newcommand{\NN}{\mathbb{N}}
\newcommand{\ZZ}{\mathbb{Z}}
\newcommand{\Zpl}{\ZZ_+}
\newcommand{\Comp}{\mathrm{Comp}}
\newcommand{\Part}{\mathrm{Part}}
\newcommand{\dd}{{\mathrm d}}
\newcommand{\ee}{{\mathrm e}}
\newcommand{\ii}{{\mathrm i}}
\newcommand{\pii}{{\mathrm \pi}}
\newcommand{\abs}[1]{|#1|}                   
\newcommand{\Abs}[1]{\bigl|#1\bigr|}         
\newcommand{\ABS}[1]{\Bigl|#1\Bigr|}         
\newcommand{\card}[1]{|#1|}                  
\newcommand{\EE}{\mathrm{E}}                 
\newcommand{\floor}[1]{{\lfloor #1\rfloor}}
\newcommand{\haf}{\mathrm{haf}} 
\renewcommand{\geq}{\geqslant}
\renewcommand{\leq}{\leqslant}
\newcommand{\norm}[1]{\|#1\|}
\newcommand{\per}{\mathrm{per}}
\newcommand{\set}[1]{\underline{#1}}
\newcommand{\Subs}[2]{\bbS(#1,#2)} 
\newcommand{\type}{\mathrm{type}}
\newcommand{\vecsum}[1]{|#1|}
\newcommand{\binomial}[2]{\genfrac{(}{)}{0pt}{}{#1}{#2}}
\newcommand{\newatop}[2]{\genfrac{}{}{0pt}{}{\scriptstyle #1}{\scriptstyle #2}}
\begin{document}
\title{
New inequalities for permanents and hafnians \\
and some generalizations} 
\author{Bero Roos\footnote{
Postal address: 
FB IV -- Mathematics,
University of Trier,
54286 Trier,
Germany. \newline
E-mail: bero.roos@uni-trier.de
}\\
University of Trier
}

\date{}
\maketitle
\begin{abstract} 
We show new upper bounds for permanents and hafnians, which are 
particularly useful for complex matrices. Multidimensional permanents 
and hyperhafnians are considered as well. The permanental bounds
improve on a Hadamard type inequality of Carlen, Lieb and Loss
(2006, Methods and Applications of Analysis 13, 1--17) and
Cobos, K\"uhn and Peetre (2006, Integral Equations and Operator 
Theory 56, 57--70). 
Our proofs are based on a more general inequality, which can be applied 
to generalized Laplace type expansions of the matrix functions under 
consideration. As application, we show new bounds on the 
characteristic function of random diagonal sums. A
numerical comparison  shows the performance
of some of our permanental bounds. 
\medskip

\noindent 
{\small \emph{2020 Mathematics Subject Classification.}
Primary
15A15;    
secondary 
15A45.    

\noindent
\emph{Key words and phrases.} 
Generalized Laplace type expansion; 
hafnian; 
linear rank statistic; 
permanent;  
random diagonal sum; 
upper bound.}
\end{abstract}
\section{Introduction and main results for permanents}
\subsection{Motivation}
The permanent of a square matrix is defined as a kind of ``signless'' 
determinant; a precise definition can be found in \eqref{e76521655} 
below. However, unlike determinants,
permanents are in many cases difficult to evaluate, see 
\citet{MR526203}. Therefore inequalities for 
permanents have been extensively studied in the literature. 
For upper bounds of permanents of non-negative matrices (or matrices 
with entries in $\{0,1\}$), see \citet{MR504978}, 
\citet{MR1752167,MR2010711,MR2100577}, \citet{MR2460503}, 
\citet{MR2382516}, and the references therein.  
Such inequalities can also easily be employed for complex matrices by 
using the triangle inequality, see \cite[Section 6.4]{MR504978}. 
In fact, suppose that $Z=(z_{j,r})$ is a complex $n\times n$ matrix 
for $n\in\NN=\{1,2,3,\dots\}$ and $\per(Z)$ denotes its permanent. 
Then $\abs{\per(Z)}\leq\per(\abs{Z})$, where 
$\abs{Z}=(\abs{z_{j,r}})$. Now any permanental upper bound for 
non-negative matrices can be applied to the right-hand side of the 
inequality above. But the resulting bound for $\abs{\per(Z)}$ 
only depends on the absolute values of the entries of $Z$, that is, on 
$\abs{Z}$. In particular, this holds for an upper 
bound of Hadamard type, which we discuss below, see \eqref{e75870}.

In recent years, there has been increasing interest in permanents of 
complex matrices, see, for example, \citet{MR2115306}, 
\citet{MR3029555}, \citet{MR3464208}, and \citet{MR3899574}.
However, it seems that there are only a few upper bounds of $\per(Z)$ 
available not just depending on $\abs{Z}$. An important one is 
due to \citet{MR0194445}. From their Corollary 3.2, it follows that 
\begin{align}\label{e782649}
\abs{\per(Z)}\leq\Big(\frac{1}{n}\sum_{j=1}^n\alpha_j^{2n}\Big)^{1/2},
\end{align}
where $\alpha_1,\dots,\alpha_n$ are the singular values of $Z$, that 
is, the non-negative square roots of the eigenvalues of $ZZ^*$. 
Here $Z^*$ denotes the complex conjugate transpose of $Z$. 
Another inequality was proved by \citet[Remark 3]{MR1054132} and says 
that  
\begin{align}\label{e6217658}
\abs{\per(Z)}\leq\norm{Z}_p^n,
\end{align}
where $p\in[1,\infty]$ and 
$\norm{Z}_p=\sup\{\norm{Zx}_p\,|\,
x\in\CC^{n\times 1},\,\norm{x}_p\leq 1\}$ 
is the operator norm of $Z$ with respect to the $\ell^p$-norm on 
$\CC^{n\times 1}$. Here $\CC$ is the set of complex numbers,
$\CC^{n\times 1}$ is the vector space of all column $n$-tuples of 
numbers in $\CC$, and $Zx\in\CC^{n\times 1}$ 
denotes the product of matrix $Z$ with vector $x$. For $p=2$, 
\eqref{e6217658} also follows from \eqref{e782649} as has been noted by 
\citet[page 273]{MR761074}. The more general approach in 
\citet[Section 5]{MR2237389} leads to a simple proof  of 
\eqref{e6217658} (see also \citet[Section 2]{MR3235277} for the 
case $p=2$). Unfortunately, the computation of the bounds in 
\eqref{e782649} and \eqref{e6217658} may be somewhat complicated. 

Permanents of matrices with entries in $\{-1,1\}$ can be estimated from 
above by using a bound, which was conjectured by 
\citet[pages 13--14]{MR842720} and recently proved by 
\citet{MR3875614}:
if $n\geq 5$ and if $Z$ has entries in $\{-1,1\}$ and rank $k\in\set{n}$, then 
\begin{align}\label{e8732657}
\abs{\per(Z)}\leq \per(D_{n,k-1}),
\end{align}
where $D_{n,\ell}=(d_{i,j})$ for $\ell\in\{0,\dots,n\}$ is the 
$n\times n$ matrix with entries 
\begin{align*}
d_{i,j}=\left\{
\begin{array}{rl}
-1 & \mbox{ if } i=j\leq \ell,\\
1  & \mbox{ otherwise},
\end{array}
\right.\qquad \mbox{ for }i,j\in\{1,\dots,n\}.
\end{align*}
As has been shown by \citet[page 236]{MR344267}
and mentioned in \cite[Proposition 5.2]{MR842720},
the right-hand side of \eqref{e8732657} can be evaluated using that 
\begin{align*}
\per(D_{n,k-1})=\sum_{j=0}^{k-1}(-2)^j\binomial{k-1}{j}(n-j)!.
\end{align*}

For an inequality of Hadamard type for permanents
and a slightly stronger statement, we need further notation. 
For two sets $J$ and $K$, let $K^J=\{f\,|\,f:\,J\longrightarrow K\}$ 
be the set of all maps from $J$ to $K$
and let $K_{\neq}^J=\{f\in K^J\,|\,f\mbox{ is injective}\}$.   
For instance, if $J$ and $K$ have the same cardinality 
$\card{J}=\card{K}\in\NN$, then $K_{\neq}^J$ is the set of all 
bijective maps from $J$ to $K$. For $n\in\NN$, let 
$\set{n}=\{1,\dots,n\}$, $K^n=K^{\set{n}}$, and 
$K_{\neq}^n=K_{\neq}^{\set{n}}$. 
In particular, $\set{n}_{\neq}^{n}$
is the set of all permutations on the set $\set{n}$. 
If $f\in K^J$, then we write 
$f_j=f(j)$ for $j\in J$ and, in the case $n\in\NN$ and $J=\set{n}$, 
$f=(f_1,\dots,f_n)$. 
The permanent of a matrix $Z=(z_{j,k})\in\CC^{J\times K}$ 
for two finite sets $J$ and $K$ with the same cardinality 
$\card{J}=\card{K}$ is defined by 
\begin{align}\label{e76521655}
\per(Z)=\sum_{k\in K^{J}_{\neq}}\prod_{j\in J}z_{j,k_j}
=\sum_{j\in J^K_{\neq}}\prod_{k\in K}z_{j_k,k}.
\end{align} 
If $J=K=\emptyset$, then  $\per(Z)=1$, since empty products are 
defined to be $1$. 
Permanents are of considerable interest in various areas of 
science. Properties and applications can, for instance,
be found in \citet{MR504978,MR688551,MR900069}, \citet{MR2140290}, 
and \citet{MR3540979}.

An inequality of Hadamard type for the permanent of a matrix 
$Z=(z_{j,r})\in\CC^{n\times n}=\CC^{\set{n}\times\set{n}}$ 
for $n\in\NN$ states that 
\begin{align}\label{e75870}
\abs{\per(Z)}\leq n!\prod_{r=1}^n\Big(\frac{1}{n}\sum_{j=1}^n
\abs{z_{j,r}}^2\Big)^{1/2},
\end{align}
see \citet[Theorem 1.1]{MR2275869} and \citet[Theorem 5.1]{MR2256997}. 
In \citep{MR2275869}, the reader can find two different proofs of 
\eqref{e75870}, the second of which contains a slightly stronger 
statement on permanents of submatrices of $Z$, that is permanental 
minors. For sets $J'\subseteq J$, $K'\subseteq K$ and a matrix 
$Z=(z_{j,k})\in\CC^{J\times K}$, let $Z[J',K']\in\CC^{J'\times K'}$ 
denote the submatrix of $Z$ with entries $z_{j,k}$ for 
$(j,k)\in J'\times K'$. \citet[Theorem 3.1]{MR2275869} proved that, 
for $Z=(z_{j,r})\in\CC^{n\times n}$, $M\subseteq \set{n}$, 
and $m=\card{M}$, 
\begin{align} \label{e21659}
\frac{1}{\binomial{n}{m}}
\sum_{J\in\Subs{\set{n}}{m}}
\ABS{\frac{1}{m!}\per(Z[J,M])}^2
&\leq \prod_{r\in M}\Bigl(\frac{1}{n}
\sum_{j\in\set{n}}\abs{z_{j,r}}^2\Bigr),
\end{align}
where, for a set $K$ and $m\in\Zpl=\{0,1,2,\dots\}$ with 
$m\leq\card{K}$, we denote by
\begin{align*}
\Subs{K}{m}=\{K'\,|\,K'\subseteq K,\,\card{K'}=m\}
\end{align*}
the set of all subsets of $K$ containing exactly $m$ elements.
For $M=\set{n}$ and $m=n$, \eqref{e21659} reduces to \eqref{e75870}. 

The main aim of the present paper is to present inequalities 
better than \eqref{e75870} and \eqref{e21659}, 
where the bounds do not only depend on 
$\abs{Z}$. For instance, one of our results is the following theorem
containing upper bounds of the permanent of a matrix with
entries on the unit circle in the complex plane.
A proof can be found in Section \ref{s08368}. 
Let $\RR$ be the set of real numbers. 
For $x\in\RR$, let $\floor{x}\in\ZZ$ be the largest integer $\leq x$.

\begin{theorem}\label{t28765988}
Let $n\in\NN\setminus\{1\}$, $d=\floor{\frac{n}{2}}$, $t\in\RR$, 
$Z(t)=(z_{j,r}(t))\in\CC^{n\times n}$ with 
$z_{j,r}(t)=\exp(\ii t x_{j,r})$ and 
$x_{j,r}\in\RR$ for all $j,r\in\set{n}$. 
Let $y_{j,k,r,s}=x_{j,r}-x_{k,r}-x_{j,s}+x_{k,s}$ for 
$(j,k),(r,s)\in\set{n}_{\neq}^2$.
For arbitrary 
$s=(s(1),\dots,s(n))\in\set{n}_{\neq}^n$, we then have 
\begin{align}\label{e56134}
\frac{1}{n!}\abs{\per(Z(t))}
\leq \prod_{r=1}^{d}\Big(\frac{1}{n(n-1)}
\sum_{(j,k)\in\set{n}_{\neq}^2}\cos^2\Big(
\frac{ty_{j,k,s(2r-1),s(2r)}}{2}\Big)\Big)^{1/2}.
\end{align}
Further
\begin{align}\label{e56135}
\frac{1}{n!}\abs{\per(Z(t))}
&\leq \Big(\frac{1}{n^2(n-1)^2}\sum_{(r,s)\in\set{n}_{\neq}^2}
\sum_{(j,k)\in\set{n}_{\neq}^2}
\cos^2\Big(\frac{ty_{j,k,r,s}}{2}\Big)\Big)^{d/2}.
\end{align}
\end{theorem}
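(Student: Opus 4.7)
My plan is to combine the generalized Laplace expansion of $\per(Z(t))$ along the column pairing prescribed by $s$ with an explicit identity for $2\times 2$ permanental minors of a unit-modulus matrix, and then to invoke the general multilinear inequality foreshadowed in the abstract; inequality \eqref{e56135} will follow from \eqref{e56134} by averaging over $s$. The starting point is the identity
\begin{equation*}
\per(Z(t)[\{j,k\},\{\alpha,\beta\}])=z_{j,\alpha}(t)z_{k,\beta}(t)+z_{j,\beta}(t)z_{k,\alpha}(t)=2\,e^{\ii t(x_{j,\alpha}+x_{k,\beta}+x_{j,\beta}+x_{k,\alpha})/2}\cos(ty_{j,k,\alpha,\beta}/2),
\end{equation*}
which gives $|\per(Z(t)[\{j,k\},\{\alpha,\beta\}])|^{2}=4\cos^{2}(ty_{j,k,\alpha,\beta}/2)$ whenever $\{j,k\},\{\alpha,\beta\}\in\Subs{\set{n}}{2}$.

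Setting $M_r=\{s(2r-1),s(2r)\}$, the generalized Laplace expansion of $\per(Z(t))$ along the ordered column partition $(M_1,\ldots,M_d)$ reads $\per(Z(t))=\sum_{(J_1,\ldots,J_d)}\prod_{r=1}^{d}\per(Z(t)[J_r,M_r])$ when $n$ is even, the sum running over the $N=n!/2^{d}$ ordered partitions of $\set{n}$ into $2$-subsets; when $n$ is odd, one leftover column $s(n)$ contributes an additional unit-modulus factor that is absorbed similarly. I would then apply the paper's general multilinear inequality, which I expect takes the form
\begin{equation*}
\Bigl|\sum_{(J_1,\ldots,J_d)}\prod_{r=1}^{d}F_r(J_r)\Bigr|^{2}\leq N^{2}\prod_{r=1}^{d}\frac{1}{\binom{n}{2}}\sum_{J\in\Subs{\set{n}}{2}}|F_r(J)|^{2}
\end{equation*}
for arbitrary $F_r\colon\Subs{\set{n}}{2}\to\CC$. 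Taking $F_r(J)=\per(Z(t)[J,M_r])$ and substituting the identity above, the right-hand side becomes $N^{2}\prod_{r}(4c_r)$, with $c_r$ the mean of $\cos^{2}(ty_{j,k,s(2r-1),s(2r)}/2)$ over $\set{n}_{\neq}^{2}$; since $N\cdot 2^{d}=n!$, taking square roots yields \eqref{e56134}.

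For \eqref{e56135} I would take the logarithm of \eqref{e56134}, average over $s\in\set{n}_{\neq}^{n}$ (valid because the left-hand side does not depend on $s$), and apply Jensen's inequality to the concave logarithm. Under the uniform law on $\set{n}_{\neq}^{n}$ the marginal distribution of $(s(2r-1),s(2r))$ is uniform on $\set{n}_{\neq}^{2}$, so a direct calculation yields $\EE_{s}[c_r(s)]=C$ for every $r$, where $C$ is the double average appearing in \eqref{e56135}; this gives $|\per(Z(t))|/n!\leq C^{d/2}$.

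The main obstacle is establishing the general multilinear inequality: a naive Cauchy-Schwarz on the ordered-partition sum is too lossy by a factor of order $(n(n-1))^{d}/n!$, so a more delicate argument is required. I would expect an induction on $d$ whose base case $d=1$ is essentially \eqref{e21659}, combined with a careful accounting of how restricting the ordered-partition sum to $2$-subsets disjoint from an initially chosen $J_{1}$ cancels against the shrinking normalizing factor $\binom{n-2}{2}$.
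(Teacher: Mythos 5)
Your proposal is correct, and for \eqref{e56134} it follows essentially the paper's route: your $2\times2$ minor identity is exactly what underlies \eqref{e42648}, your ordered-partition expansion is the generalized Laplace expansion \eqref{e7821965}, and the ``general multilinear inequality'' you postulate is precisely Corollary \ref{c176547} (the case $\ell=1$, $w=(2,\dots,2)$ of Theorem \ref{t26376566}), with your constant $N=n!/2^{d}=n!/w!$ matching; the paper packages these steps as \eqref{e76652} of Theorem \ref{t45789}, and your guess that the key inequality is proved by induction on $d$ with a refined Cauchy--Schwarz at each step is indeed how the paper proceeds, via the subset-convolution inequality of Theorem \ref{t983467}. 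Where you genuinely diverge is \eqref{e56135}: the paper derives it from a separate result, Theorem \ref{t932657} and \eqref{eq875987}, whose proof needs the version of the general inequality in which one averages over the column subsets as well, whereas you obtain \eqref{e56135} as a formal consequence of \eqref{e56134} by averaging the logarithm of the bound over a uniformly random permutation $s$ and applying Jensen (equivalently, AM--GM on $\prod_r c_r(s)^{1/d}$), using that the marginal law of $(s(2r-1),s(2r))$ is uniform on $\set{n}_{\neq}^{2}$. This averaging argument is valid --- if some $c_r(s)=0$ then \eqref{e56134} already forces $\per(Z(t))=0$ and \eqref{e56135} is trivial, and otherwise all logarithms are finite --- and it reproduces exactly the constant in \eqref{e56135}. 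It is more economical than the paper's route and shows, as a bonus, that \eqref{e56135} is never better than the best choice of $s$ in \eqref{e56134}; its only limitation is that it exploits the fact that all $d$ factors in \eqref{e56135} coincide, so it does not recover the full strength of Theorem \ref{t932657} for compositions with unequal parts.
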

\begin{remark} \label{r62586}
Let the assumptions of Theorem \ref{t28765988} hold.
\begin{enumerate}

\item \label{r62586.a} In the present situation, the inequalities in 
Theorem \ref{t28765988} are better than \eqref{e75870}, 
since the latter only gives $ \frac{1}{n!}
\abs{\per(Z(t))}\leq1$ while the right-hand sides of \eqref{e56134} and 
\eqref{e56135} are always $\leq 1$. 
The function $\RR\ni t\mapsto \tfrac{1}{n!}\per(Z(t))$ 
can be interpreted as the characteristic function of a linear rank 
statistic; see Section \ref{s676257}, for the discussion of a more 
general situation. Theorem \ref{t28765988} complements 
Theorem 2.1 in \citet{MR719749} under the present assumptions. 
In contrast to the upper bound given in that theorem, 
our bounds contain explicit constants and are valid for all $t\in\RR$.
For a first application of \eqref{e56135}, see \citet{Roos2020}.

\item The bound in \eqref{e56134} depends on an arbitrary permutation 
$s$. For $s=(1,\dots,n)$, we obtain
\begin{align*} 
\frac{1}{n!}\abs{\per(Z(t))}
\leq \prod_{r=1}^{d}\Big(\frac{1}{n(n-1)}
\sum_{(j,k)\in\set{n}_{\neq}^2}\cos^2\Big(
\frac{ty_{j,k,2r-1,2r}}{2}\Big)\Big)^{1/2},
\end{align*}
which however, may not be the best bound obtainable 
from \eqref{e56134}. 
In general, it is incomparable with \eqref{e56135}. 

\item 
From the simple identity 
$\cos^2(x)-1+x^2=4x^2\int_0^1(1-u)\sin^2(ux)\,\dd u$
together with the inequality $\abs{\sin(x)}\leq \abs{x}$ 
for $x\in\RR$, it follows that 
\begin{align*}
\cos^2(x)
\leq 1-x^2+x^2\min\Big\{1,\frac{x^2}{3}\Big\},\qquad(x\in\RR).
\end{align*}
This can be applied to the right-hand sides of the inequalities in 
Theorem \ref{t28765988}. For instance, from \eqref{e56135} we derive
\begin{align}\label{e825766}
\frac{1}{n!}\abs{\per(Z(t))}
\leq (1-\theta)^{d/2},
\end{align}
where 
\begin{align*}
\theta&=\frac{t^2}{4n^2(n-1)^2}\sum_{(r,s)\in\set{n}_{\neq}^2}
\sum_{(j,k)\in\set{n}_{\neq}^2}y_{j,k,r,s}^2
\max\Bigl\{0,1- \frac{t^2y_{j,k,r,s}^2}{12}\Bigr\}.
\end{align*}
The right-hand-side of \eqref{e825766} is bounded by $1$; it is
small if $\theta$ and $d$ are  large.
\end{enumerate}
\end{remark}


The method used in this paper is not only applicable to permanents, but 
also to some other matrix functions, such as multidimensional 
permanents,  hafnians, and hyperhafnians. In fact, we only need that 
the matrix function under consideration satisfies a generalized Laplace 
type expansion, see Lemmata \ref{l486369} and \ref{l324579} below. 
These expansions together with a general inequality given in 
Theorem \ref{t26376566} immediately imply our upper bounds. 
It should be mentioned that our 
main inequalities can be generalized in the case of matrices over a 
complex commutative unital Banach algebra. 
However, we do not follow this idea here. 
For the proof of Theorem \ref{t26376566}, we use Theorem \ref{c72566},
which generalizes Theorem \ref{t983467},
which, in turn, contains a non-trivial generalization of an auxiliary 
inequality in \citet[Proposition 3.1]{MR3916882} on subset 
convolutions of set functions. 
We note that, due to lack of space, we omitted the characterizations 
of equality in our inequalities except for the one presented in 
Theorem~\ref{t983467}, see Remark \ref{r28759}. 

The structure of the paper is as follows. The next subsection 
introduces some further notation on weak compositions of non-negative
integers and ordered weak partitions of sets. Subsection \ref{s08368} 
is devoted to our upper bounds for permanents. 
In Subsections \ref{s676257} and \ref{s49698}, we discuss an 
application to random diagonal sums and give a numerical comparison of 
permanental bounds. The purpose of
Section~\ref{s561476} is to present our bounds for 
multidimensional permanents, hafnians, and hyperhafnians. 
The proofs of the results of Section \ref{s561476} are given
in Section~\ref{s7265755} by using the general Theorem \ref{t26376566}.
The latter theorem is proved in Section~\ref{s64514} with the help of 
an inequality on subset convolutions of set functions. 
Section \ref{s82576} contains all the remaining proofs. 

\subsection{Weak compositions and ordered weak partitions}
This subsection is devoted to further notation. For $n\in\Zpl$, let a 
weak composition of $n$ be an (ordered) family 
$w=(w_1,\dots,w_d)\in\Zpl^d$ for $d\in\NN$, satisfying 
$\vecsum{w}:=\sum_{r=1}^d w_r=n$. The components $w_1,\dots,w_d$ of 
$w$ are called parts of $w$. If a weak composition $w$ of $n$ 
contains $d$ parts, then $w$ is called a weak $d$-composition. For 
$n\in\Zpl$ and $d\in\NN$, let $\Comp(n,d)$ be the set of all weak 
$d$-compositions of $n$. 

For a finite set $M$ with cardinality $\card{M}=m\in\Zpl$,
an ordered weak partition of $M$ is an (ordered) family 
$W=(W_1,\dots,W_d)$ for $d\in\NN$ of pairwise disjoint 
sets $W_1,\dots,W_d\subseteq M$ with
$\bigcup_{r=1}^dW_r=M$. The components $W_1,\dots,W_d$ of  $W$
are called blocks of $W$. Let $\Part(M)$ be the set of ordered weak 
partitions of $M$. The type of $W=(W_1,\dots,W_d)\in\Part(M)$ is 
defined by $\type(W)=(\card{W_1},\dots,\card{W_d})\in\Comp(m,d)$. For 
$d\in\NN$ and $w\in\Comp(m,d)$, let $\Part(M,w)$ be the set of all 
ordered weak partitions $W\in\Part(M)$ with $d$ blocks and 
$\type(W)=w$. We note that $\card{\Part(M,w)}=\frac{m!}{w!}$
as is easily shown. Here, as usual, we let $w!=w_1!\cdots w_d!$
for $w\in\Zpl^d$. Let 
$\Part_d(M)=\bigcup_{w\in\Comp(m,d)}\Part(M,w)$ be 
the set of ordered weak partitions of $M$ consisting of $d$ blocks. 

Whenever we speak of a composition (resp. partition), we mean a weak 
composition of a non-negative integer (resp.\ ordered weak partition
of a finite set) if not specified otherwise. It should be emphasized 
that, in the present paper, compositions (resp.\ partitions) are 
allowed to contain zero parts (resp.\ empty blocks). 

\subsection{Inequalities for permanents} \label{s08368}
In what follows, let $n\in\NN$, $\emptyset\neq M\subseteq\set{n}$, 
$m=\card{M}$, and $Z=(z_{j,r})\in\CC^{\set{n}\times M}$.
For $K\subseteq M$ and $k=\card{K}$, let 
\begin{align}
f(Z,K)
&=\frac{1}{\binomial{n}{k}}\sum_{J\in\Subs{\set{n}}{k}}
\ABS{\frac{1}{k!}\per(Z[J,K])}^2,\label{e918476}\\
\widetilde{f}(Z,K)
&=\frac{1}{\binomial{n}{k}}
\sum_{J\in\Subs{\set{n}}{k}}
\prod_{j\in J}\Big(\frac{1}{k}\sum_{r\in K}\abs{z_{j,r}}^2\Big),
\quad(K\neq\emptyset). \nonumber
\end{align}
Further, we set $\widetilde{f}(Z,\emptyset)=1$. 
For $k\in\{0,\dots,m\}$, let 
\begin{align} \label{e671454}
F(Z,k)
=\frac{1}{\binomial{m}{k}}\sum_{K\in\Subs{M}{k}}f(Z,K)
=\frac{1}{\binomial{m}{k}\binomial{n}{k}}
\sum_{K\in\Subs{M}{k}}\sum_{J\in\Subs{\set{n}}{k}}
\ABS{\frac{1}{k!}\per(Z[J,K])}^2.
\end{align}
We have 
\begin{gather}
f(Z,\emptyset)=\widetilde{f}(Z,\emptyset)=F(Z,0)=1,
\quad f(Z,\{r\})
=\frac{1}{n}\sum_{j\in\set{n}}\abs{z_{j,r}}^2 
= \widetilde{f}(Z,\{r\})\quad\mbox{for }r\in M,\label{e8256}\\
F(Z,1)=\frac{1}{mn}\sum_{r\in M}\sum_{j\in\set{n}}\abs{z_{j,r}}^2,\quad
F(Z,m)=f(Z,M).\label{e8625}
\end{gather}
The permanent of $Z$ can be evaluated by using  $F$ or $f$. 
In fact, if $M=\set{n}$ and $m=n$, then 
\begin{align*}
F(Z,n)=f(Z,\set{n})=\ABS{\frac{1}{n!}\per(Z)}^2.
\end{align*}
In Remark \ref{r276559}(\ref{r276559.c}), one can find a formula
for $f(Z,K)$ in the case $\card{K}=2$ and for $F(Z,2)$. 
An application of the Hadamard type inequality \eqref{e75870} to the 
summands in \eqref{e918476} gives 
\begin{align}\label{e86451}
f(Z,K)\leq \widetilde{f}(Z,K)
\quad\mbox{for all } K\subseteq M. 
\end{align}
Employing the notation above, the inequality 
of \citet[Theorem 3.1]{MR2275869} (see \eqref{e21659}) can be 
reformulated as 
\begin{align}\label{e5352735}
f(Z,M)
\leq \prod_{r\in M}f(Z,\{r\})
=\prod_{r\in M}\widetilde{f}(Z,\{r\}).
\end{align}
Using a generalization of the second method of proof 
in \citet{MR2275869}, it was shown in \citet[Theorem 3.1]{MR3916882}
that, for arbitrary $d\in\set{m}$ and $W\in\Part_d(M)$, we have 
\begin{align}\label{e73766509}
f(Z,M)
&\leq\prod_{r=1}^d\widetilde{f}(Z,W_r)
\end{align}
and, in the case $M=\set{n}$ and $m=n$, 
\begin{align}\label{e73766510}
\abs{\per(Z)}
&\leq n!\prod_{r=1}^d\sqrt{\widetilde{f}(Z,W_r)}.
\end{align}
Unfortunately, the bounds in \eqref{e73766509} and \eqref{e73766510}
only depend on $\abs{Z}$. 
The following theorem contains a substantial improvement, 
as follows from \eqref{e86451}.

\begin{theorem}\label{t45789}
Let $n\in\NN$, $\emptyset\neq M\subseteq\set{n}$, $m=\card{M}$, 
$Z=(z_{j,r})\in\CC^{\set{n}\times M}$, and $f(Z,K)$ for 
$K\subseteq M$ be as in \eqref{e918476}. 
For $k\in\{0,\dots,m\}$, $K\in\Subs{M}{k}$, $d\in\NN$, and 
$W\in\Part_d(K)$, we then have
\begin{align}\label{e84376}
f(Z,K)
&\leq\prod_{r=1}^d f(Z,W_r).
\end{align}
In particular, in the case $K=M=\set{n}$ and $k=m=n$, we have 
\begin{align} \label{e76652}
\Abs{\per(Z)}
&\leq n!\prod_{r=1}^d \sqrt{f(Z,W_r)}.
\end{align}
\end{theorem}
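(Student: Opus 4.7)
The strategy is to derive \eqref{e84376} as a direct consequence of the abstract Theorem~\ref{t26376566}, applied via the generalised Laplace expansion of the permanent. The algebraic input is the identity
\begin{align*}
\per(Z[J,K])=\sum_{(J_1,\dots,J_d)\in\Part(J,w)}\prod_{r=1}^d\per(Z[J_r,W_r])
\end{align*}
valid for any $K\in\Subs{M}{k}$, $J\in\Subs{\set{n}}{k}$, and $W=(W_1,\dots,W_d)\in\Part_d(K)$ with $w=\type(W)$. I would verify this by grouping each bijection $\sigma\colon K\to J$ according to the images $J_r:=\sigma(W_r)$: the sizes force $(J_1,\dots,J_d)\in\Part(J,w)$, while the restrictions $\sigma|_{W_r}\colon W_r\to J_r$ contribute independently the factors $\per(Z[J_r,W_r])$.

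Next, setting $M_{J,K}:=\per(Z[J,K])/k!$ and using $\card{\Part(J,w)}=k!/w!$, division of the Laplace identity by $k!$ rewrites $M_{J,K}$ as the uniform average
\begin{align*}
M_{J,K}=\frac{1}{\card{\Part(J,w)}}\sum_{(J_1,\dots,J_d)\in\Part(J,w)}\prod_{r=1}^d M_{J_r,W_r}
\end{align*}
of products of block-level normalised permanents. Since $f(Z,K)=\binomial{n}{k}^{-1}\sum_{J\in\Subs{\set{n}}{k}}\abs{M_{J,K}}^2$ and $f(Z,W_r)=\binomial{n}{w_r}^{-1}\sum_{J_r\in\Subs{\set{n}}{w_r}}\abs{M_{J_r,W_r}}^2$, the family $(M_{J,K})$ fulfils exactly the Laplace hypothesis required by Theorem~\ref{t26376566}, which then yields \eqref{e84376} at once. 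For the permanental form \eqref{e76652} I would specialise $K=M=\set{n}$, so that $k=m=n$, $\binomial{n}{k}=1$, the only admissible $J$ is $\set{n}$, and $f(Z,\set{n})=\abs{\per(Z)/n!}^2$; taking square roots in \eqref{e84376} and multiplying by $n!$ gives the asserted bound on $\abs{\per(Z)}$.

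The main obstacle is of course Theorem~\ref{t26376566} itself: the plan above only reduces both \eqref{e84376} and \eqref{e76652} to that abstract inequality. The genuine content resides in its proof, which rests on the subset-convolution estimate that forms the technical core of the paper and constitutes a non-trivial generalisation of \cite[Proposition~3.1]{MR3916882}; once that machinery is granted, the deduction of Theorem~\ref{t45789} is routine bookkeeping on top of the Laplace identity above.
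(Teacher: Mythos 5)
Your proposal is correct and follows essentially the same route as the paper: the paper omits a separate proof of Theorem~\ref{t45789} and instead proves the multidimensional generalization (Theorem~\ref{t45790}) by combining the generalized Laplace expansion \eqref{e7821965} of Lemma~\ref{l486369} with Theorem~\ref{t26376566} (Corollary~\ref{c176547} in the case $\ell=1$), which is exactly your reduction with $g_r(V)=\per(Z[V,W_r])/w_r!$ and $M_{J,K}=\tfrac{w!}{k!}R(g,J)$. Your counting argument for the Laplace identity (each $(J_1,\dots,J_d)\in\Part(J,w)$ arising from $w!$ bijections) matches Remark~\ref{r28659}, and the specialization to \eqref{e76652} is handled the same way.
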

We omit the proof of Theorem \ref{t45789} and also the proofs of 
Remark~\ref{r348769}(\ref{r348769.c}) and Theorem \ref{t932657} 
below, since, in Subsection \ref{s78256}, we give some 
generalizations to multidimensional permanents.
For the corresponding proofs, see Section \ref{s7265755}.
Note that, if $\card{W_1},\dots,\card{W_d}$ are bounded, 
the right-hand sides of \eqref{e84376} and \eqref{e76652} can be 
evaluated in polynomial time in $n$.

\begin{remark}\label{r348769}
Let the assumptions of Theorem \ref{t45789} hold. 
\begin{enumerate}\itemsep-2pt

\item  From \eqref{e84376}, it follows that the set 
function $f(Z,\,\cdot\,)$ is logarithmically subadditive on the power 
set of $M$.

\item  
The right-hand side of \eqref{e84376} depends on the choice of 
$d\in\NN$ and the partition $W\in\Part_d(K)$ of the set $K$. 
It should be observed that the 
finer the partition is, the worse is inequality \eqref{e84376}. 
More precisely, suppose that $d'\in\NN$ and $W'\in\Part_{d'}(K)$ 
is another partition of $K$, which is finer than $W$, that is, for 
every $r'\in\set{d'}$, there is an $r\in\set{d}$ such that 
$W_{r'}'\subseteq W_r$. Then Theorem~\ref{t45789} implies that 
\begin{align*}
f(Z,K)\leq\prod_{r=1}^d f(Z,W_r)\leq\prod_{r'=1}^{d'} f(Z,W_{r'}').
\end{align*}
In particular, \eqref{e5352735} is the 
worst inequality among those given in \eqref{e84376} for $K=M$.

\item \label{r348769.c}
In \eqref{e84376}, equality holds if 
\begin{enumerate}[(i)] \itemsep-2pt

\item a number $r\in\set{d}$ exists such that
$\per(Z[J,W_r])=0$ for all $J\in\Subs{\set{n}}{\card{W_r}}$, or 

\item there are numbers $y_1,\dots,y_d\in\CC$ such that 
$\per(Z[J,W_r])=y_r$ for all $r\in\set{d}$ and 
$J\in\Subs{\set{n}}{\card{W_r}}$. 
\end{enumerate}
\end{enumerate}
\end{remark}

See \citet[Theorem 1]{MR0170901}, for an upper bound of $F(Z,k)$ 
depending on the singular values of the complex square matrix $Z$. 
In the following theorem, we present an upper bound, which however is 
difficult to compare with the one mentioned above. 
\begin{theorem}\label{t932657}
Let $n\in\NN$, $\emptyset\neq M\subseteq\set{n}$, $m=\card{M}$,
$Z=(z_{j,r})\in\CC^{\set{n}\times M}$, 
$f(Z,K)$ for $K\subseteq M$ be as in \eqref{e918476}, and 
$F(Z,k)$ for $k\in\{0,\dots,m\}$ be as in \eqref{e671454}. 
For $k\in\{0,\dots,m\}$, $d\in\NN$, and $w\in\Comp(k,d)$, we then have
\begin{align}\label{e84377}
F(Z,k) 
&\leq \prod_{r=1}^dF(Z,w_r).
\end{align}
In particular, in the case $k=m$, we have 
\begin{align} \label{e28376} 
f(Z,M) 
&\leq \prod_{r=1}^dF(Z,w_r).
\end{align}
If $M=\set{n}$ and $k=m=n$, then we obtain
\begin{align}\label{eq875987}
\Abs{\per(Z)}
&\leq n!\prod_{r=1}^d\sqrt{F(Z,w_r)}.
\end{align}
\end{theorem}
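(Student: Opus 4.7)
The plan is to obtain Theorem~\ref{t932657} as an averaged counterpart of Theorem~\ref{t45789}. The statement asserts that the sequence $k\mapsto F(Z,k)$ is log-subadditive with respect to compositions, so by an induction on $d$ (grouping the first $d-1$ parts) it suffices to prove the case $d=2$: $F(Z,w_1+w_2)\leq F(Z,w_1)\,F(Z,w_2)$. The specializations \eqref{e28376} and \eqref{eq875987} follow immediately, since $F(Z,m)=f(Z,M)$ when $\Subs{M}{m}$ has a single element, and $F(Z,n)=\Abs{\per(Z)/n!}^2$ when $M=\set{n}$ and $k=m=n$; extracting the square root of \eqref{e84377} then yields \eqref{eq875987}.

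For the $d=2$ case I would apply Theorem~\ref{t45789} pointwise: for every $K\in\Subs{M}{k}$ and every ordered partition $(K_1,K_2)\in\Part(K,(w_1,w_2))$ one has $f(Z,K)\leq f(Z,K_1)f(Z,K_2)$. Averaging over all $\binom{k}{w_1}$ such partitions of $K$ and then summing over $K\in\Subs{M}{k}$ converts the left-hand side into $\binom{m}{k}F(Z,k)$ and the right-hand side into a sum over ordered disjoint pairs $(K_1,K_2)$ with $\card{K_r}=w_r$. The remaining task is to control this disjoint-constrained sum by the unconstrained product $\binom{m}{w_1}\binom{m}{w_2}F(Z,w_1)F(Z,w_2)$ with the sharp constant one.

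The main obstacle is this last comparison. A naive bound replacing $\sum_{\text{disjoint}}$ by $\sum_{\text{all}}$ introduces a combinatorial prefactor $\frac{m!\,(m-k)!}{(m-w_1)!\,(m-w_2)!}$, which is strictly larger than one unless $w_1=0$ or $w_2=0$, and hence is too weak. Moreover, a generic negative-correlation principle is unavailable: for arbitrary non-negative functions $h_1$ on $\Subs{M}{w_1}$ and $h_2$ on $\Subs{M}{w_2}$, the disjoint-sampling expectation of $h_1(K_1)h_2(K_2)$ can strictly exceed the independent-sampling one (simple counterexamples already arise with $h_r$ supported on a single set). What rescues the argument is that $f(Z,\cdot)$ is not merely log-subadditive (Remark~\ref{r348769}(\ref{r348769.c})) but actually arises, through the generalized Laplace expansion of the permanent, as a subset-convolution average; the paper's general Theorem~\ref{t26376566} is designed to exploit precisely this structure and to replace the disjoint sum by the independent product with constant one. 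Invoking Theorem~\ref{t26376566} at this step closes the $d=2$ case and, via the induction, the full theorem.
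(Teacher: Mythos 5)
Your reduction of \eqref{e28376} and \eqref{eq875987} to \eqref{e84377} is fine, and you have correctly diagnosed that the naive route---apply Theorem~\ref{t45789} pointwise to each $K\in\Subs{M}{k}$, average over ordered partitions $(K_1,K_2)$ of $K$, and then compare the resulting sum over \emph{disjoint} pairs with the independent product---cannot be closed, since the required negative-correlation inequality fails for general non-negative set functions. But the fix you propose does not close it either: Theorem~\ref{t26376566} is not a statement about disjoint-versus-independent sampling of products $h_1(K_1)h_2(K_2)$; it bounds the mean square of a partition-sum $R(g,J_1,\dots,J_\ell)$ by a product of mean squares of the blocks $g_r$. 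It cannot be ``invoked at this step'' to repair the inequality you have reduced to, and your own counterexample shows that no theorem with the content you ascribe to it can exist. The proposal therefore stops exactly where the real work begins.

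The missing idea is that one must not fix $K$ and partition it. Instead one uses the two-sided Laplace expansion \eqref{e7821968}, which for $J\in\Subs{\set{n}}{k}$ and $K\in\Subs{M}{k}$ writes $\frac{k!}{(w!)^{2}}\per(Z[J,K])$ as a sum over \emph{pairs} of partitions $(V_1,\dots,V_d)\in\Part(J,w)$ and $(W_1,\dots,W_d)\in\Part(K,w)$ of $\prod_{r=1}^d g_r(V_r,W_r)$, where $g_r(V_r,W_r)=\frac{1}{w_r!}\per(Z[V_r,W_r])$. Since $F(Z,k)$ is by \eqref{e671454} an average of $\abs{\frac{1}{k!}\per(Z[J,K])}^2$ over both $J$ and $K$, this is exactly the left-hand side of \eqref{e65463} with $\ell=2$, $A_1=\set{n}$, $A_2=M$, and $w(1)=w(2)=w$, and Theorem~\ref{t26376566} then yields $F(Z,k)\leq\prod_{r=1}^d F(Z,w_r)$ in one stroke, for all $d$, with no induction on $d$ at this level and no disjointness comparison. (This is the $\ell=1$ case of the paper's proof of Theorem~\ref{t257876}, which runs the same argument with $\ell+1$ coordinates.) Theorem~\ref{t45789} is not an ingredient of this proof; it is the sibling result obtained from the one-sided expansion \eqref{e7821967}, in which only the row set is partitioned and $K$ stays fixed.
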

Note that, if $w_1,\dots,w_d$ are bounded, 
the right-hand sides of \eqref{e84377}--\eqref{eq875987} can be 
evaluated in polynomial time in $n$.


\begin{remark}\label{r276559}  
Let the assumptions of Theorem \ref{t932657} hold. 
\begin{enumerate} 

\item 
From \eqref{e84377}, it follows that the function 
$F(Z,\,\cdot\,)$ is logarithmically subadditive on the set 
$\{0,\dots,m\}$.

\item \label{r276559.b} 
In \eqref{e84377}, equality holds if all the $z_{j,r}$ for 
$j\in\set{n}$, $r\in M$ are identical. 

\item \label{r276559.c} 
If  $2\leq m\leq n$, $(u,v)\in M^2_{\neq}$, and $K=\{u,v\}$,
then we have $\card{K}=2$ and, as is easily shown,
\begin{align}
f(Z,K)
&
=\frac{1}{4n(n-1)}
\sum_{(j,k)\in\set{n}_{\neq}^2}\abs{z_{j,u}z_{k,v}+z_{k,u}z_{j,v}}^2
\label{e782568}\\
&=\frac{1}{4n(n-1)}\sum_{(j,k)\in\set{n}_{\neq}^2}
\Big(b_{j,k,u,v}
+4a_{j,u}a_{k,v}a_{k,u}a_{j,v}
\cos^2\Big(\frac{y_{j,k,u,v}}{2}\Big)\Big),\nonumber
\end{align} 
where $z_{j,r}=a_{j,r}\exp(\ii x_{j,r})$, $a_{j,r}\in[0,\infty)$,
$x_{j,r}\in\RR$ for $j\in\set{n}$ and $r\in M$, and, for 
$(j,k)\in\set{n}_{\neq}^2$ and $(r,s)\in M_{\neq}^2$,  
\begin{align*} 
b_{j,k,r,s}
&=(a_{j,r}a_{k,s}-a_{k,r}a_{j,s})^2, \quad
y_{j,k,r,s}
=x_{j,r}-x_{k,r}-x_{j,s}+x_{k,s}.
\end{align*}
Further
\begin{align}
F(Z,2)
&=\frac{1}{4m(m-1)n(n-1)}
\sum_{(r,s)\in M_{\neq}^2}\sum_{(j,k)\in\set{n}_{\neq}^2}
\abs{z_{j,r}z_{k,s}+z_{k,r}z_{j,s}}^2\label{e561548}\\
&=\frac{1}{4m(m-1)n(n-1)}
\sum_{(r,s)\in M_{\neq}^2}\sum_{(j,k)\in\set{n}_{\neq}^2}
\Big(b_{j,k,r,s}+4a_{j,r}a_{k,s}a_{k,r}a_{j,s}
\cos^2\Big(\frac{y_{j,k,r,s}}{2}\Big)\Big).\nonumber
\end{align}
If, for example, $a_{j,r}=1$ for all $j\in\set{n}$ and $r\in M$, then
\begin{align}
f(Z,K)
&=\frac{1}{n(n-1)}\sum_{(j,k)\in\set{n}_{\neq}^2}
\cos^2\Big(\frac{y_{j,k,u,v}}{2}\Big),\label{e42648}\\
F(Z,2)
&=\frac{1}{m(m-1)n(n-1)}
\sum_{(r,s)\in M_{\neq}^2}\sum_{(j,k)\in\set{n}_{\neq}^2}
\cos^2\Big(\frac{y_{j,k,r,s}}{2}\Big).
\label{e43515}
\end{align}
\end{enumerate}
\end{remark}

\begin{proof}[Proof of Theorem \ref{t28765988}] 
For $r\in\set{d}$, let $W_r=\{s(2r-1),s(2r)\}$. 
If $n$ is even, then let $W=(W_1,\dots,W_d)\in\Part_d(\set{n})$,
and if $n$ is odd, then let $W_{d+1}=\{s(n)\}$ and 
$W=(W_1,\dots,W_{d+1})\in\Part_{d+1}(\set{n})$. 
Inequality \eqref{e56134} now follows from \eqref{e76652}, 
\eqref{e42648}, and \eqref{e8256}.
Inequality \eqref{e56135} is a consequence of 
\eqref{eq875987}, \eqref{e43515}, and \eqref{e8625}. 
\end{proof}
\begin{remark} 
Let the assumptions of Theorem \ref{t932657} hold. 
For $k\in\{0,\dots,m\}$, let
\begin{align*}
\varphi(Z,k)
=\sum_{K\in\Subs{M}{k}}\sum_{J\in\Subs{\set{n}}{k}}\per(Z[J,K])
\end{align*}
be the sum of all permanental minors of order $k$ of $Z$.
In particular, we have
$\varphi(Z,m)=\sum_{J\in\Subs{\set{n}}{m}}\per(Z[J,M])$. 
A simple application of the Cauchy-Schwarz inequality gives,
for all $k\in\{0,\dots,m\}$, 
\begin{align}\label{e89326}
\abs{\varphi(Z,k)}
\leq \binomial{m}{k}\binomial{n}{k}k!\sqrt{F(Z,k)},\quad  
\abs{\varphi(Z,m)}&\leq \binomial{n}{m}m!\sqrt{f(Z,M)}.
\end{align}
The right-hand sides of the inequalities in \eqref{e89326} can now be 
further estimated by using any upper bound for $F(Z,k)$ or $f(Z,M)$ 
given above. It seems to be difficult to give a detailed comparison 
of the resulting bounds with those from the literature. For 
inequalities concerning $\varphi(Z,k)$ for non-negative square 
matrices, see \citet{MR0194444}, \citet{MR1176450}, \citet{MR1404175}, 
\citet{MR2460503}, and the references given there. 
\end{remark}

\subsection{Application to random diagonal sums}\label{s676257}
Let $n\in\NN\setminus\{1\}$, $d=\floor{\frac{n}{2}}$, 
and $X=(X_{j,r})$  be a random $n\times n$ matrix consisting of 
real-valued random variables $X_{j,r}$ for $j,r\in\set{n}$ 
with characteristic functions $\varphi_{j,r}$,
that is, $\varphi_{j,r}(t)=\EE\ee^{\ii t X_{j,r}}$ for $t\in\RR$, 
where $\EE$ means expectation. We assume that, for every 
$r\in\set{n}_{\neq}^n$, the generalized diagonal 
$(X_{1,r(1)},\dots,X_{n,r(n)})$ of $X$ is stochastically
independent. For instance, this is true
if the family of all rows (or all columns) of $X$ is independent. 

Let $\pi=(\pi(1),\dots,\pi(n))$ be a uniformly distributed random 
permutation of the set $\set{n}$ independent of $X$. Let 
\begin{align*}
S_n=\sum_{j=1}^nX_{j,\pi(j)}
\end{align*}
be the random diagonal sum of $X$, that is, the sum of 
the entries in the generalized random diagonal 
$(X_{1,\pi(1)},\dots,X_{n,\pi(n)})$ of $X$. 
If the entries of $X$ are constants, 
$S_n$ is a linear rank statistics, for instance, see 
\citet{MR1680991}. For the normal approximation of $S_n$ in the 
case that all $X_{j,r}$, $(j,r\in\set{n})$ are independent, 
see \citet{MR3322321} and the references therein. 
In the case that $X_{j,1}=\dots=X_{j,n}$ for all $j\in\set{n}$, 
$S_n=\sum_{j=1}^nX_{j,1}$ is a sum of independent random variables. 

Let $\varphi$ be the characteristic function of $S_n$. It is easily 
seen that, for $t\in\RR$, 
\begin{align*}
\varphi(t)=\EE \ee^{\ii tS_n}=\frac{1}{n!}\per(Z(t)),
\end{align*}
where $Z(t)=(\varphi_{j,r}(t))\in\CC^{n\times n}$. 
The following theorem contains new upper bounds for $\abs{\varphi}$.
\begin{theorem} 
Let the above assumptions hold, $t\in\RR$, and 
$s=(s(1),\dots,s(n))\in\set{n}_{\neq}^n$ be arbitrary. Then 
\begin{align}\label{e87256}
\abs{\varphi(t)} 
&\leq
\prod_{r=1}^d\Bigl(\frac{1}{n(n-1)}
\sum_{(j,k)\in \set{n}_{\neq}^2}
\frac{1}{4}
\abs{\varphi_{j,s(2r-1)}(t)\varphi_{k,s(2r)}(t)
+\varphi_{k,s(2r-1)}(t)\varphi_{j,s(2r)}(t)}^2\Bigr)^{1/2}
\end{align}
and
\begin{align}\label{e987266}
\abs{\varphi(t)} 
&\leq \Bigl(\frac{1}{n^2(n-1)^2}
\sum_{(j,k)\in \set{n}_{\neq}^2}\sum_{(r,s)\in \set{n}_{\neq}^2}
\frac{1}{4}\abs{\varphi_{j,r}(t)\varphi_{k,s}(t)
+\varphi_{k,r}(t)\varphi_{j,s}(t)}^2\Bigr)^{d/2}.
\end{align}
In the case of odd $n$, \eqref{e87256} 
remains true in a sharper form if the right-hand side is multiplied by  
$(\frac{1}{n}\sum_{j=1}^n\abs{\varphi_{j,s(n)}(t)}^2)^{1/2}$.
An analogous statement holds for \eqref{e987266} and the factor 
$(\frac{1}{n^2}\sum_{j=1}^n\sum_{r=1}^n
\abs{\varphi_{j,r}(t)}^2)^{1/2}$.
\end{theorem}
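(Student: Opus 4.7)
The plan is to read the characteristic function as $\varphi(t) = \frac{1}{n!}\per(Z(t))$ with $Z(t) = (\varphi_{j,r}(t)) \in \CC^{\set{n}\times\set{n}}$, and then to reduce \eqref{e87256} and \eqref{e987266} directly to the two permanental bounds \eqref{e76652} from Theorem \ref{t45789} and \eqref{eq875987} from Theorem \ref{t932657}, combined with the explicit formulas \eqref{e782568} for $f(Z,K)$ at two-element subsets and \eqref{e561548} for $F(Z,2)$ provided in Remark \ref{r276559}(\ref{r276559.c}). No new idea is required; both inequalities amount to a transcription of the existing permanental bounds into the language of diagonal sums, once the appropriate partition, respectively composition, is chosen.

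For \eqref{e87256}, I fix $s \in \set{n}^n_{\neq}$ and set $W_r = \{s(2r-1), s(2r)\}$ for $r \in \set{d}$. If $n$ is even, then $W = (W_1,\ldots,W_d) \in \Part_d(\set{n})$, and \eqref{e76652} applied to $Z(t)$ gives
\begin{align*}
|\varphi(t)| \leq \prod_{r=1}^d \sqrt{f(Z(t), W_r)}.
\end{align*}
Using the general complex formula \eqref{e782568} with $K = W_r$, $u = s(2r-1)$, $v = s(2r)$ — note that only the expression involving $|z_{j,u}z_{k,v} + z_{k,u}z_{j,v}|^2$ enters, so no polar decomposition of the $\varphi_{j,r}(t)$ is invoked — produces exactly the right-hand side of \eqref{e87256}. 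For odd $n$, I would enlarge the partition by the singleton $W_{d+1} = \{s(n)\}$ so that $W \in \Part_{d+1}(\set{n})$; \eqref{e76652} then supplies an additional factor $\sqrt{f(Z(t), W_{d+1})}$, which by \eqref{e8256} equals $(\tfrac{1}{n}\sum_{j=1}^n |\varphi_{j,s(n)}(t)|^2)^{1/2}$, yielding the sharpened statement. The unsharpened \eqref{e87256} in the odd case follows at once because the extra factor is bounded by one.

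For \eqref{e987266}, I apply Theorem \ref{t932657} with $M = \set{n}$ and $k = n$. For even $n$, the composition $w = (2,\ldots,2) \in \Comp(n,d)$ turns \eqref{eq875987} into $|\varphi(t)| \leq F(Z(t),2)^{d/2}$, and \eqref{e561548} expands $F(Z(t),2)$ into precisely the double sum appearing on the right-hand side of \eqref{e987266}. For odd $n$, I take $w = (2,\ldots,2,1) \in \Comp(n,d+1)$; the additional factor $\sqrt{F(Z(t),1)}$ equals $(\tfrac{1}{n^2}\sum_{j,r} |\varphi_{j,r}(t)|^2)^{1/2}$ by \eqref{e8625}, giving the sharpened statement, from which the original \eqref{e987266} follows by the same monotonicity argument. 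There is no substantive obstacle; the only point requiring care is the parity bookkeeping and choosing the partition $W$ respectively composition $w$ so that the summation indices line up with the displayed right-hand sides.
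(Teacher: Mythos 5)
Your proposal is correct and follows exactly the paper's own route: the same partition $W_r=\{s(2r-1),s(2r)\}$ (augmented by the singleton $\{s(n)\}$ for odd $n$), the same appeal to \eqref{e76652} together with \eqref{e782568} and \eqref{e8256} for the first bound, and to \eqref{eq875987} together with \eqref{e561548} and \eqref{e8625} for the second. The only detail worth making explicit is that dropping the extra factor in the odd case uses $\abs{\varphi_{j,r}(t)}\leq 1$, which holds since the $\varphi_{j,r}$ are characteristic functions.
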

\begin{proof} Let $W$ be as in the proof of Theorem \ref{t28765988}. 
Inequality \eqref{e87256} follows from \eqref{e76652},
\eqref{e782568}, and \eqref{e8256}. 
Inequality \eqref{e987266} is a consequence of
\eqref{eq875987}, \eqref{e561548}, and \eqref{e8625}. 
The additional statement is clear. 
\end{proof}

Upper bounds for characteristic functions can be useful in the 
approximation of probability distributions, nonparametric statistics, 
and stability problems; for instance, see \citet{MR1353441} and 
\citet{MR1745554}. The theorem 
above is a generalization of Theorem \ref{t28765988} and can easily be 
generalized to the case of multivariate random variables $X_{j,r}$ for 
$j,r\in\set{n}$. We could not find any comparable inequality in the 
literature; see also Remark \ref{r62586}(\ref{r62586.a}).  
\begin{remark}
The bounds in \eqref{e87256} and \eqref{e987266} can be viewed as the 
square roots of non-negative characteristic functions of 
sums of independent random variables. This easily 
follows from the fact that, for $(j,k),(r,s)\in\set{n}_{\neq}^2$, 
$\RR\ni t\mapsto\frac{1}{4}\abs{\varphi_{j,r}(t)\varphi_{k,s}(t)
+\varphi_{k,r}(t)\varphi_{j,s}(t)}^2$ 
is the characteristic function of the symmetrized random variable
$Y_{j,k,r,s}-\widetilde{Y}_{j,k,r,s}$, where 
$\widetilde{Y}_{j,k,r,s}$ is an independent copy of 
\begin{align*}
Y_{j,k,r,s}:=I_{j,k,r,s}(X_{j,r}+X_{k,s})+
(1-I_{j,k,r,s})(X_{k,r}+X_{j,s})
\end{align*}
and $I_{j,k,r,s}$ is a Bernoulli random variable independent of 
$X_{j,r},X_{k,s},X_{k,r},X_{j,s}$, which takes the values $0$ and $1$
each with probability $\frac{1}{2}$. 
\end{remark}


\subsection{Numerical example} \label{s49698}
Let us illustrate the performance of some upper bounds. 
Let $n=8$, $t\in\RR$ and 
\begin{align*}
Z=Z(t)
=\begin{psmallmatrix}
1&\ee^{\ii t}&1&1&1&\ee^{\ii t}&1&\ee^{\ii t}\\
1&1&\ee^{\ii t}&\ee^{\ii t}&1&1&\ee^{\ii t}&1\\
\ee^{\ii t}&\ee^{\ii t}&\ee^{\ii t}&1&\ee^{\ii t}
&\ee^{\ii t}&\ee^{\ii t}&1\\
1&\ee^{\ii t}&\ee^{\ii t}&\ee^{\ii t}&1&\ee^{\ii t}&1&\ee^{\ii t}\\
\ee^{\ii t}&1&1&1&1&1&1&\ee^{\ii t}\\
\ee^{\ii t}&\ee^{\ii t}&1&\ee^{\ii t}&1&\ee^{\ii t}&1&\ee^{\ii t}\\
\ee^{\ii t}&1&\ee^{\ii t}&1&\ee^{\ii t}&\ee^{\ii t}&\ee^{\ii t}&1\\
1&1&\ee^{\ii t}&\ee^{\ii t}&1&\ee^{\ii t}&1&\ee^{\ii t}\\
\end{psmallmatrix}    
\in\CC^{n\times n}
\end{align*}
be a matrix with entries on the unit circle in the complex plane.
From \eqref{e76652} and \eqref{eq875987}, it follows that
\begin{align}\label{e165487}
\abs{\per(Z(t))}\leq n!h_k(t) 
\end{align}
and 
\begin{align}\label{e165488}
\abs{\per(Z(t))}\leq n!H_k(t) 
\end{align}
for  $k\in\set{2}$, where 
\begin{align*}
h_1(t)&=(f(Z(t),\{1,2\})f(Z(t),\{3,4\})
f(Z(t),\{5,6\})f(Z(t),\{7,8\}))^{1/2},\\\
h_2(t)&=(f(Z(t),\{1,2,3\})f(Z(t),\{4,5,6\})f(Z(t),\{7,8\}))^{1/2},  
\end{align*}
and
\begin{align*}
H_1(t)=(F(Z(t),2))^{2},\quad
H_2(t)=F(Z(t),3)\sqrt{F(Z(t),2)}.
\end{align*}
Using the computer algebra software Maple, we obtain that 
\begin{align*}
\lefteqn{\frac{1}{n!}\abs{\per(Z(t))}
=\frac{1}{5040} 
(154450
+1145926\cos(t)
+3615364 \cos^2(t)
+6353620 \cos^3(t)}\\
&\quad{}
+6849754 \cos^4(t)
+4692814 \cos^5(t)
+2023768 \cos^6(t)
+508240 \cos^7(t)
+57664 \cos^8(t)
)^{1/2}
\end{align*}
and
\begin{align*}
f(Z(t),\{1,2\})
&=\frac{1}{7}(4+2\cos(t)+\cos^2(t)),\\
f(Z(t),\{3,4\})
&=\frac{1}{56}(37+15\cos(t)+4\cos^2(t)),\\
f(Z(t),\{5,6\})
&=\frac{1}{7}(5+2\cos(t)),\\
f(Z(t),\{7,8\})
&=\frac{1}{28}(13+15\cos^2(t)),\\
f(Z(t),\{1,2,3\})
&=\frac{1}{126}(28+43\cos(t)+43\cos^2(t)+12\cos^3(t)),\\
f(Z(t),\{4,5,6\})
&=\frac{1}{126}(37+50\cos(t)+35\cos^2(t)+4\cos^3(t)),\\
F(Z(t),2)
&=\frac{1}{1568}(963+377\cos(t)+228\cos^2(t)),\\
F(Z(t),3)
&=\frac{1}{14112}(4415+5069\cos(t)+3959\cos^2(t)+669\cos^3(t)).
\end{align*}
We note that the right-hand side of \eqref{e56134} with 
$s=(1,\dots,8)$ (resp.\ the right-hand side of \eqref{e56135}) 
is equal to $h_1(t)$ (resp.\ to $H_1(t)$). 
In Table 1, we compare \eqref{e165487} and \eqref{e165488}
with the bounds from the introduction.

\begin{center}
\begin{tabular}{cc|ccc}
\multicolumn{5}{l}{Table 1: Numerical comparison of bounds and exact 
value}\\ \hline
\multirow{2}{*}{formula number} & \multirow{2}{*}{parameter}
&\multicolumn{3}{c}{upper bounds divided by $n!$}\\  
\multicolumn{2}{c|}{} & 
\multicolumn{1}{c}{$t=\pii$} & 
\multicolumn{1}{c}{$t=\tfrac{\pii}{2}$} & 
\multicolumn{1}{c}{$t=\tfrac{\pi}{4}$}\\ \hline 
\eqref{e6217658} & $p=1$  & $416.1016$ & $416.1016$  & $416.1016$\\
\eqref{e6217658} & $p=\infty$ & $416.1016$ & $416.1016$ & $416.1016$\\
\eqref{e6217658} & $p=2$  & $11.80801$ & $53.71852$ & $250.8386$\\
\eqref{e782649}  & n.a.   &$4.194852$  & $18.99307$ & $88.68481$\\
\eqref{e75870}   &  n.a.   & $1$        & $1$        & $1$ \\
\eqref{e165487}  & $k=1$  & $0.292023$ & $0.353848$ & $0.708592$\\
\eqref{e165488}  & $k=1$  & $0.269499$ & $0.377191$ & $0.734234$\\
\eqref{e8732657} & n.a.    & $0.212699$ & n.a.  & n.a.  \\
\eqref{e165487}  & $k=2$  & $0.134688$ & $0.174062$ & $0.595132$ \\
\eqref{e165488}  & $k=2$  & $0.134585$ & $0.245179$ & $0.670075$ \\
\hline\hline
\multicolumn{2}{c|}{term to be estimated} & 
\multicolumn{3}{c}{exact values} \\ \hline
\multicolumn{2}{c|}{$\tfrac{1}{n!}\abs{\per(Z(t))}$} 
& $0.003968\dots$ & $0.077976\dots$ & $0.556344\dots$ \\ \hline
\end{tabular}
\end{center}

All bounds have been rounded up. The entry ``n.a.'' means 
``not available''. 
In view of the trivial bound $\tfrac{1}{n!}\abs{\per(Z(t))}\leq 1$, 
we see that \eqref{e6217658} for $p\in\{1,2,\infty\}$, 
\eqref{e782649}, and \eqref{e75870} do not give any further 
information. For \eqref{e8732657}, we used that 
$Z(\pi)\in\{-1,1\}^{8\times 8}$ has rank $7$. 
The best bounds in this example are \eqref{e165487} and 
\eqref{e165488} with parameter $k=2$. In particular, 
we see that $h_1(t)$ and $H_1(t)$ 
(resp.\ $h_2(t)$ and $H_2(t)$) are generally incomparable, since $h_1(\pii)>H_1(\pii)$, but $h_1(t)< H_1(t)$ for 
$t\in\{\frac{\pii}{2},\frac{\pii}{4}\}$, and analogously for 
$h_2(t)$ and $H_2(t)$.  
\section{Further results}\label{s561476}
\subsection{Inequalities for multidimensional permanents}\label{s78256}
For $\ell\in\NN$, finite sets $J_1,\dots,J_\ell$, $K$ with 
$\card{J_1}=\dots=\card{J_\ell}=\card{K}$, 
and an $(\ell+1)$-dimensional matrix 
$Z=(z(j_1,\dots,j_\ell,k))
\in\CC^{J_1\times\dots\times J_\ell\times K}$,
the $(\ell+1)$-dimensional permanent of $Z$ is defined by 
\begin{align}\label{e8612766}
\per_\ell(Z)
=\sum_{j^{(1)}\in (J_1)_{\neq}^K}\dots\sum_{j^{(\ell)}\in 
(J_\ell)^K_{\neq}}\prod_{k\in K}z(j_k^{(1)},\dots,j_k^{(\ell)},k).
\end{align} 
Two-dimensional permanents are permanents as defined in 
\eqref{e76521655}, that is $\per_1(Z)=\per(Z)$. 
Sometimes $(\ell+1)$-dimensional permanents are called 
$(\ell+1)$-way permanents, see \citet{Rice1918} and 
\citet[Chapter XXIV]{MR0114826} or hyperpermanents, see 
\citet{MR2422309} and  \citet{ShashuaZassHazan2006}. 
For properties and applications of multidimensional permanents, see
\citet{MR884116}, \citet[Chapter 4]{MR3558532},  
\citet{MR3581885}, and the references therein.
The following Theorems \ref{t45790} and \ref{t257876} contain our 
inequalities for multidimensional permanents. 
\begin{theorem}\label{t45790}
Let $\ell,n\in\NN$, 
$\emptyset\neq M\subseteq\set{n}$, $m=\card{M}$, 
$Z=(z(j_1,\dots,j_\ell,r))
\in\CC^{\set{n}\times\dots\times\set{n}\times M}$. 
For a set $K\subseteq M$ and $k=\card{K}$, let 
\begin{align}\label{e782566}
f_\ell(Z,K)
&=\frac{1}{\binomial{n}{k}^\ell}\sum_{J_1\in\Subs{\set{n}}{k}}
\dots\sum_{J_\ell\in\Subs{\set{n}}{k}}
\ABS{\frac{1}{(k!)^\ell}\per_\ell(Z[J_1,\dots,J_\ell,K])}^2.
\end{align}
For $k\in\{0,\dots,m\}$, $K\in\Subs{M}{k}$, $d\in\NN$,
and $W\in\Part_d(K)$, we then have 
\begin{align}\label{e84378}
f_\ell(Z,K) &\leq \prod_{r=1}^d f_\ell(Z,W_r).
\end{align}
In particular, in the case $K=M=\set{n}$ and $k=m=n$, we have 
\begin{align*}  
\Abs{\per_\ell(Z)} &\leq  (n!)^\ell \prod_{r=1}^d \sqrt{f_\ell(Z,W_r)}.
\end{align*}
\end{theorem}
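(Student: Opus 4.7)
The plan is to follow exactly the route indicated in the paper: combine a generalized Laplace type expansion of $\per_\ell$ along a partition of its column-index set $K$ with the general inequality of Theorem~\ref{t26376566}. The second inequality in the theorem is immediate from \eqref{e84378} by specializing to $K=M=\set{n}$ and $k=n$: the outer sum in the definition \eqref{e782566} of $f_\ell$ then collapses to the single term $J_1=\dots=J_\ell=\set{n}$, giving $f_\ell(Z,\set{n})=\abs{\per_\ell(Z)/(n!)^\ell}^2$, from which the bound on $\abs{\per_\ell(Z)}$ follows by taking square roots. So only \eqref{e84378} requires genuine work.

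For \eqref{e84378}, the first step is to establish the Laplace expansion. Fix $K\in\Subs{M}{k}$, $W=(W_1,\dots,W_d)\in\Part_d(K)$ with $\type(W)=w=(k_1,\dots,k_d)$, and $J_1,\dots,J_\ell\in\Subs{\set{n}}{k}$. Any $\ell$-tuple of bijections $j^{(i)}\colon K\to J_i$ appearing in the defining sum \eqref{e8612766} is uniquely reconstructed from the ordered partitions $(j^{(i)}(W_1),\dots,j^{(i)}(W_d))\in\Part(J_i,w)$ for $i\in\set{\ell}$ together with the restricted bijections $W_r\to j^{(i)}(W_r)$. Collecting the restricted bijections for each block $r$ into a single lower-dimensional permanent yields the identity
\begin{align*}
\per_\ell(Z[J_1,\dots,J_\ell,K])=\sum_{\substack{(U_1^{(i)},\dots,U_d^{(i)})\in\Part(J_i,w)\\ i\in\set{\ell}}}\prod_{r=1}^d\per_\ell(Z[U_r^{(1)},\dots,U_r^{(\ell)},W_r]),
\end{align*}
which is exactly the generalized Laplace expansion envisaged by Lemmata~\ref{l486369}--\ref{l324579}.

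Next I would substitute this identity into the definition of $f_\ell(Z,K)$. The quantity inside the average over $(J_1,\dots,J_\ell)$ becomes the squared modulus of an $\ell$-fold subset convolution whose factors are the set functions $g_r(U^{(1)},\dots,U^{(\ell)})=\per_\ell(Z[U^{(1)},\dots,U^{(\ell)},W_r])/(k_r!)^\ell$ defined on $\ell$-tuples of $k_r$-subsets of $\set{n}$. Theorem~\ref{t26376566} is tailored precisely to bound averaged squared moduli of such convolutions by products of the averaged squared moduli of the individual factors, and invoking it delivers $f_\ell(Z,K)\leq\prod_{r=1}^d f_\ell(Z,W_r)$, which is \eqref{e84378}.

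The main obstacle is the bookkeeping in this last step: the $\ell$-fold joint sum over $(J_1,\dots,J_\ell)\in\Subs{\set{n}}{k}^\ell$ and over $\Part(J_i,w)$ for each $i$ must reindex cleanly to the subset-convolution form required by Theorem~\ref{t26376566}, and the binomial and factorial prefactors $\binomial{n}{k}^\ell$, $\binomial{n}{k_r}^\ell$, $(k!)^\ell$, $(k_r!)^\ell$ must combine with the multinomial $k!/\prod_r k_r!$ to leave exactly the product $\prod_r f_\ell(Z,W_r)$ without any extra constant. A naive Cauchy--Schwarz bound applied directly to the right-hand side of the Laplace expansion loses precisely these constants; the non-trivial content of Theorem~\ref{t26376566}, which generalizes the subset-convolution inequality of \citet[Proposition~3.1]{MR3916882}, is exactly what compensates for this loss and produces the sharp form stated.
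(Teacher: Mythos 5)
Your proposal is correct and follows essentially the same route as the paper: the Laplace-type identity you derive is exactly \eqref{e7821967} of Lemma~\ref{l486369}, your normalized block functions $g_r$ coincide with those in the paper's proof, and the bookkeeping of the factors $\binomial{n}{k}^\ell$, $(k!)^\ell$, $(w_r!)^\ell$ and $w!/k!$ works out precisely as you anticipate when Theorem~\ref{t26376566} is applied with $A_s=\set{n}$ and $w(s)=w$ for all $s$. Nothing is missing.
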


\begin{remark}\label{r2875876}
In \eqref{e84378}, equality holds if 
\begin{enumerate}[(i)] 

\item \label{r2875876.i} a number $r\in\set{d}$ exists such that
$\per_\ell(Z[J_1,\dots,J_\ell,W_r])=0$ for all 
$J_1,\dots,J_\ell\in\Subs{\set{n}}{\card{W_r}}$, or 

\item \label{r2875876.ii}
there are numbers $y_1,\dots,y_d\in\CC$ such that 
$\per_\ell(Z[J_1,\dots,J_\ell,W_r])=y_r$ for all $r\in\set{d}$ and 
$J_1,\dots,J_\ell\in\Subs{\set{n}}{\card{W_r}}$. 
\end{enumerate}
\end{remark}

\begin{theorem}\label{t257876}
Let $\ell,n\in\NN$, $\emptyset\neq M\subseteq\set{n}$, $m=\card{M}$,
$Z=(z(j_1,\dots,j_\ell,r))
\in\CC^{\set{n}\times\dots\times\set{n}\times M}$, 
and, for $k\in\{0,\dots,m\}$,
\begin{align} \label{t257877}
F_\ell(Z,k)
&=\frac{1}{\binomial{m}{k}}\sum_{K\in\Subs{M}{k}} f_\ell(Z,K),
\end{align}
where $f_\ell(Z,K)$ for $K\in\Subs{M}{k}$ is defined as in 
\eqref{e782566}. For $k\in\{0,\dots,m\}$, $d\in\NN$, and
$w\in\Comp(k,d)$, we then have
\begin{align}\label{t257878}
F_\ell(Z,k) 
&\leq \prod_{r=1}^dF_\ell(Z,w_r).
\end{align}
In particular, in the case $k=m$, we have 
\begin{align*} 
f_\ell(Z,M) 
&\leq \prod_{r=1}^dF_\ell(Z,w_r).
\end{align*}
If $M=\set{n}$ and $k=m=n$, then we obtain
\begin{align*} 
\Abs{\per_\ell(Z)}
&\leq (n!)^\ell\prod_{r=1}^d\sqrt{F_\ell(Z,w_r)}.
\end{align*}
\end{theorem}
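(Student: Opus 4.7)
The plan is to derive Theorem~\ref{t257876} from the general inequality of Theorem~\ref{t26376566}, applied to the generalized Laplace type expansion for multidimensional permanents provided by Lemmata~\ref{l486369} and~\ref{l324579}. This parallels the route used for Theorem~\ref{t45790}, but with the additional averaging over $K\in\Subs{M}{k}$ that is built into the definition of $F_\ell(Z,k)$.

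First I would rewrite the definition of $F_\ell$ from \eqref{t257877} and \eqref{e782566} in fully expanded form as
\[
F_\ell(Z, k)
=\frac{1}{\binomial{m}{k}\binomial{n}{k}^\ell (k!)^{2\ell}}
\sum_{K\in\Subs{M}{k}}\,\sum_{J_1,\dots,J_\ell\in\Subs{\set{n}}{k}}
\Abs{\per_\ell(Z[J_1,\dots,J_\ell,K])}^2,
\]
and similarly for each $F_\ell(Z,w_r)$, so that $F_\ell(Z,k)$ appears as a normalized squared $\ell^2$-norm of the multidimensional permanental minors of size $k$. For every $K$ and tuple $(J_1,\dots,J_\ell)$ appearing in the sum, the generalized Laplace type expansion decomposes $\per_\ell(Z[J_1,\dots,J_\ell,K])$ as a sum, indexed by ordered partitions of $K$ of type $w$ together with compatible ordered partitions of each $J_i$, of products of smaller multidimensional permanents built from sub-blocks of sizes $w_1,\dots,w_d$. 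This supplies precisely the input structure that Theorem~\ref{t26376566} is formulated to exploit.

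The third step is to invoke Theorem~\ref{t26376566} on this setup: applied to the family of complex quantities $\per_\ell(Z[J_1,\dots,J_\ell,K])$ with the normalizations coming from \eqref{e782566} and \eqref{t257877}, the general inequality yields \eqref{t257878}. The two remaining statements of Theorem~\ref{t257876} are then immediate corollaries. Taking $k=m$ collapses $\Subs{M}{m}$ to $\{M\}$, so that $F_\ell(Z,m)=f_\ell(Z,M)$ and one recovers the second displayed inequality; specializing further to $M=\set{n}$ and $k=m=n$ reduces $f_\ell(Z,\set{n})$ to $\abs{\per_\ell(Z)/(n!)^\ell}^2$, and the bound on $\abs{\per_\ell(Z)}$ follows upon taking square roots.

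The main obstacle, and the reason Theorem~\ref{t257876} does not come out of Theorem~\ref{t45790} by a naive averaging over $K$, is a combinatorial loss. If one applies Theorem~\ref{t45790} pointwise in $K$, averages over $(W_1,\dots,W_d)\in\Part(K,w)$, and then bounds the resulting average of $\prod_r f_\ell(Z,W_r)$ over ordered tuples of pairwise disjoint sub-blocks by the analogous average over arbitrary tuples, one is left with a combinatorial factor of the form $(m!)^{d-1}(m-k)!/\prod_r(m-w_r)!$, which is strictly greater than $1$ in general. Theorem~\ref{t26376566} sidesteps this loss by treating the subset averaging intrinsically, via the refined inequality on subset convolutions of set functions developed in Theorems~\ref{c72566} and~\ref{t983467}, which themselves generalize the auxiliary inequality of \citet[Proposition~3.1]{MR3916882}.
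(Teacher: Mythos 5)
Your proposal is correct and follows essentially the same route as the paper: one applies Theorem~\ref{t26376566} with $\ell+1$ coordinate sets ($A_1=\dots=A_\ell=\set{n}$ and $A_{\ell+1}=M$) to the expansion \eqref{e7821968}, which sums over ordered partitions of $K$ as well as of the $J_i$, so that the averaging over $K\in\Subs{M}{k}$ is absorbed into the general inequality exactly as you describe. The only slip is the reference to Lemma~\ref{l324579}, which concerns hyperhafnians and plays no role here; only \eqref{e7821968} from Lemma~\ref{l486369} is needed.
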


\begin{remark}\label{r8164876}
In \eqref{t257878}, equality holds if all the 
$z(j_1,\dots,j_\ell,r)$ for $j_1,\dots,j_\ell\in\set{n}$, 
$r\in M$ are identical. 
\end{remark}

For $\ell=1$, Theorems \ref{t45790}, \ref{t257876} and 
Remarks \ref{r2875876}, \ref{r8164876}
simplify to Theorems \ref{t45789}, \ref{t932657} and 
Remarks \ref{r348769}(\ref{r348769.c}), 
\ref{r276559}(\ref{r276559.b}), 
respectively. The proofs of Theorems \ref{t45790}, \ref{t257876}
and Remark \ref{r2875876} can be found in Section \ref{s7265755}.

\subsection{Inequalities for hafnians}
Let $m\in\NN$, $J$ be a set with $\card{J}=n=2m$, 
$Z=(z_{j,r})\in\CC^{J\times J}$ be a symmetric matrix, that is 
$z_{j,r}=z_{r,j}$
for all $j,r\in J$. The hafnian of $Z$, 
introduced by the physicist \citet{MR0059787}, is defined by  
\begin{align*}
\haf(Z)=\sum_{\{\{j_1,k_1\},\dots,\{j_m,k_m\}\}\in T}
\prod_{r=1}^m z_{j_r,k_r},
\end{align*}
where T is the set of all $\frac{n!}{m!2^m}$
unordered partitions of $J$ into unordered pairs. 
Alternatively, 
\begin{align}\label{e276547}
\haf(Z)
=\frac{1}{m!2^m}\sum_{j\in J_{\neq}^n}\prod_{r=1}^m z_{j(2r-1),j(2r)}.
\end{align}
For convenience, we set $\haf(Z)=1$ 
for $Z\in\CC^{J\times J}$ with $J=\emptyset$. We note that 
$\haf(Z)$ is independent of the values $z_{j,j}$ for $j\in\set{n}$.  
Hafnians are generalizations of permanents since, for 
$Z\in\CC^{n\times n}$, we have  
\begin{align*} 
\per(Z)=
\haf\begin{pmatrix}
0& Z\\
Z^T & 0
\end{pmatrix},
\end{align*}
where $Z^T$ is the transpose of $Z$. 
This and further properties of hafnians can be found, e.g., 
in \citet[Chapter 4]{MR3558532} and \citet{MR0464973}.
The following theorem contains our inequality for hafnians.  
This is generalized to hyperhafnians in Theorem \ref{t7832577}, 
which is proved in Section~\ref{s7265755}.

\begin{theorem}\label{t78368}
Let $m\in\NN$, $n=2m$, $Z=(z_{j,r})\in\CC^{n\times n}$ be a symmetric 
matrix, and 
\begin{align*} 
G(Z,k)
&=\frac{1}{\binomial{n}{2k}}\sum_{J\in\Subs{\set{n}}{2k}}
\ABS{\frac{k!2^k}{(2k)!} \haf(Z[J,J])}^2 \quad \mbox{ for }
k\in\{0,\dots,m\}.
\end{align*}
For $k\in\{0,\dots,m\}$, $d\in\NN$, and $w\in\Comp(k,d)$, we then have 
\begin{align}\label{e51647}
G(Z,k)
\leq\prod_{r=1}^d G(Z,w_r).
\end{align} 
In particular, if $k=m$, then
\begin{align}\label{e86256}
\abs{\haf(Z)}
\leq\frac{n!}{m!2^m}\prod_{r=1}^d\sqrt{G(Z,w_r)}.
\end{align} 
\end{theorem}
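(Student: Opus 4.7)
My plan is to mirror the strategy used in this paper for permanents (Theorem \ref{t932657}) and multidimensional permanents (Theorem \ref{t257876}): derive a generalized Laplace type expansion for the normalized hafnians of principal submatrices and then invoke the general inequality in Theorem~\ref{t26376566}.

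First I would establish the Laplace-type identity: for $J\in\Subs{\set{n}}{2k}$ and $w=(w_1,\dots,w_d)\in\Comp(k,d)$, writing $2w=(2w_1,\dots,2w_d)\in\Comp(2k,d)$,
\begin{align*}
\frac{k!\,2^k}{(2k)!}\haf(Z[J,J])
=\frac{1}{\card{\Part(J,2w)}}\sum_{(J_1,\dots,J_d)\in\Part(J,2w)}\prod_{r=1}^d\frac{w_r!\,2^{w_r}}{(2w_r)!}\haf(Z[J_r,J_r]).
\end{align*}
To prove this I would start from \eqref{e276547}, fix any ordered partition $(B_1,\dots,B_d)$ of $\set{k}$ with $\card{B_r}=w_r$, and group bijections $j\in J_{\neq}^{2k}$ according to the induced ordered partition $(J_r(j))_{r=1}^d$ of $J$, where $J_r(j)=\{j(2s-1),j(2s):s\in B_r\}$. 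For each fixed $(J_1,\dots,J_d)\in\Part(J,2w)$ the inner sum factorizes block-by-block, and since every matching on a set of size $2w_r$ arises from exactly $w_r!\,2^{w_r}$ ordered pair-bijections, each block contributes $w_r!\,2^{w_r}\haf(Z[J_r,J_r])$. Combined with $\card{\Part(J,2w)}=(2k)!/\prod_r(2w_r)!$ and the normalizing factor $\frac{k!\,2^k}{(2k)!}$, the identity follows; a sanity check with $k=2$, $w=(1,1)$ reproduces $\tfrac{1}{3}(z_{12}z_{34}+z_{13}z_{24}+z_{14}z_{23})$ on both sides.

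Setting $g(Z,J):=\frac{k!\,2^k}{(2k)!}\haf(Z[J,J])$ for $J\in\Subs{\set{n}}{2k}$, we have $G(Z,k)=\binom{n}{2k}^{-1}\sum_{J\in\Subs{\set{n}}{2k}}\abs{g(Z,J)}^2$, and the identity above exhibits $g(Z,J)$ as an arithmetic mean of products $\prod_{r=1}^d g(Z,J_r)$ over partitions $(J_r)\in\Part(J,2w)$. This puts the problem into the exact form required by Theorem~\ref{t26376566}, whose conclusion then yields \eqref{e51647} directly. The inequality \eqref{e86256} follows as the special case $k=m$, since $\Subs{\set{n}}{2m}=\{\set{n}\}$ gives $\sqrt{G(Z,m)}=\tfrac{m!\,2^m}{n!}\abs{\haf(Z)}$.

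The main obstacle is finding the right normalization of $\haf$ so that the Laplace expansion becomes an unweighted arithmetic mean rather than a weighted sum; this is what allows Theorem~\ref{t26376566} to be applied without fuss and is the analogue, for hafnians, of the clean factorial bookkeeping that makes the argument work for $F(Z,k)$. Once the identity is secured, the remainder is a direct transcription of that argument. An alternative route is to deduce Theorem~\ref{t78368} as the special case $\ell=1$ of the hyperhafnian generalization in Theorem~\ref{t7832577}.
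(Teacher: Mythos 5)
Your proof is correct and takes essentially the same approach as the paper: the Laplace-type identity you derive is exactly Lemma \ref{l324579} specialized to hafnians, and the paper likewise feeds such an expansion into Theorem \ref{t26376566} (in the form of Corollary \ref{c176547}), proving the hyperhafnian Theorem \ref{t7832577} and then specializing. Only a trivial slip in your closing remark: the hafnian case corresponds to $\ell=2$, not $\ell=1$, of Theorem \ref{t7832577}.
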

Note that, if $w_1,\dots,w_d$ are bounded, 
the right-hand sides of \eqref{e51647} and \eqref{e86256} can be 
evaluated in polynomial time in $n$.
\begin{remark}\label{r2814576} 
Let the assumptions of Theorem \ref{t78368} hold.
\begin{enumerate} \itemsep-2pt

\item From \eqref{e51647}, it follows that the function 
$G(Z,\,\cdot\,)$ is logarithmically subadditive on the set 
$\{0,\dots,m\}$.

\item In \eqref{e51647}, equality holds if all the 
$z_{j,r}$ for $(j,r)\in\set{n}_{\neq}^2$ are identical.

\item \label{r2814576.c}
We have 
\begin{align}
G(Z,1)
&=\frac{1}{n(n-1)}\sum_{(j,r)\in\set{n}_{\neq}^2}\abs{z_{j,r}}^2,
\label{e6721456}\\
G(Z,2)
&=\frac{(n-4)!}{n!}\sum_{(u,v,w,x)\in\set{n}_{\neq}^4}\ABS{
\frac{1}{3}(z_{u,v}z_{w,x}+z_{u,w}z_{v,x}+z_{u,x}z_{v,w})}^2
\quad \mbox{ if }n\geq 4.\label{e672545}
\end{align}
\end{enumerate}
\end{remark}

The following corollary is a simple consequence of Theorem \ref{t78368}
and Remark \ref{r2814576}(\ref{r2814576.c}). 
\begin{corollary} 
Let $m\in\NN\setminus\{1\}$, $n=2m$, 
$Z=(z_{j,r})\in\CC^{n\times n}$ be a symmetric matrix. 
If $m=2d+\ell$ for $d,\ell\in\Zpl$, then  
\begin{align*}
\begin{split}
\abs{\haf(Z)}
&\leq \frac{n!}{m!2^m}\Big(\frac{(n-4)!}{n!}
\sum_{(u,v,w,x)\in\set{n}_{\neq}^4}\ABS{\frac{1}{3}
(z_{u,v}z_{w,x}+z_{u,w}z_{v,x}+z_{u,x}z_{v,w})}^2\Big)^{d/2}\\
&\quad{}\times\Big(\frac{1}{n(n-1)}\sum_{(j,r)\in\set{n}_{\neq}^2}
\abs{z_{j,r}}^2\Big)^{\ell/2}.
\end{split}
\end{align*}
In particular, if $d=0$ and $m=\ell$, we have 
\begin{align}\label{e56146}
\abs{\haf(Z)}
&\leq
\frac{n!}{m!2^m}\Big(\frac{1}{n(n-1)}
\sum_{(j,r)\in\set{n}_{\neq}^2}\abs{z_{j,r}}^2\Big)^{m/2}.
\end{align} 

\end{corollary}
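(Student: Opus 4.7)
The plan is to apply Theorem \ref{t78368} directly, choosing $k=m$ together with a composition tailored to the decomposition $m=2d+\ell$. Specifically, I would take $w\in\Comp(m,d+\ell)$ to be the composition whose first $d$ parts equal $2$ and whose remaining $\ell$ parts equal $1$; this is a valid composition since $|w|=2d+\ell=m$. Inequality \eqref{e86256} then yields
\begin{align*}
\abs{\haf(Z)}
\leq\frac{n!}{m!2^m}\prod_{r=1}^{d+\ell}\sqrt{G(Z,w_r)}
=\frac{n!}{m!2^m}\,G(Z,2)^{d/2}\,G(Z,1)^{\ell/2}.
\end{align*}

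To turn this into the stated bound, I would substitute the explicit formulas for $G(Z,1)$ and $G(Z,2)$ from Remark \ref{r2814576}(\ref{r2814576.c}), namely \eqref{e6721456} and \eqref{e672545}. Note that \eqref{e672545} requires $n\geq 4$, which is harmless here: if $d\geq 1$ then $m\geq 2$ forces $n\geq 4$, and if $d=0$ the factor $G(Z,2)^{d/2}=1$ so \eqref{e672545} is not needed. The special case $d=0$, $m=\ell$ then immediately reduces to \eqref{e56146}, since only the $G(Z,1)$ factor survives.

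There is no real obstacle: the corollary is a direct reading of Theorem~\ref{t78368} for one particular choice of composition, combined with a routine lookup of the $k=1$ and $k=2$ values of $G$. The only care point is the compatibility of the case $n=4$ (equivalently $m=2$) with \eqref{e672545}, which is handled by observing that $(n-4)!/n!=1/24$ and the sum then runs over all $4!=24$ permutations of $\set{4}$, so \eqref{e672545} remains well-defined there.
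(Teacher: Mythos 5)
Your proposal is correct and is exactly the argument the paper intends: it states the corollary as "a simple consequence of Theorem \ref{t78368} and Remark \ref{r2814576}(\ref{r2814576.c})", which is precisely your choice of the composition with $d$ parts equal to $2$ and $\ell$ parts equal to $1$ in \eqref{e86256}, followed by substituting \eqref{e6721456} and \eqref{e672545}. Your care point about $n\geq 4$ is in fact automatic, since the hypothesis $m\in\NN\setminus\{1\}$ already forces $n=2m\geq 4$.
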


\begin{remark}  
Let the assumptions of Theorem \ref{t78368} hold. 
\begin{enumerate}

\item 
The inequality given in \citet[Theorem 1]{MR0291000} implies that 
\begin{align}\label{e645617}
\abs{\haf(Z)}\leq \sqrt{\per(\abs{Z})},
\end{align}
where $\abs{Z}=(\abs{z_{j,r}})\in[0,\infty)^{n\times n}$ as previously.
This inequality is not easily comparable with \eqref{e86256}. 
Numerical examples show that it is sometimes better, but 
sometimes worse than \eqref{e56146}. 
Further, if $z_{j,r}=y\neq0$ for all $(j,r)\in\set{n}_{\neq}^2$
and $z_{j,j}=0$ for all $j\in\set{n}$, then,
in \eqref{e86256}, equality holds, but the 
right-hand side of \eqref{e645617} is larger than 
the left-hand side by a factor of the order $n^{1/4}$, 
as is easily shown by using the Stirling formula
and the identities $\haf(Z)=\frac{n!}{m!2^m}y^m$ and   
$\per(Z)=n!y^n\sum_{j=0}^n\frac{(-1)^j}{j!}$, 
see \citet[page 44]{MR504978}.

\item 
For $k\in\{0,\dots,m\}$, let
\begin{align*}
\psi(Z,k)=\sum_{J\in\Subs{\set{n}}{2k}} \haf(Z[J,J])  
\end{align*}
denote the sum of all subhafnians of order $2k$ of $Z$. 
The Cauchy-Schwarz inequality implies that  
\begin{align}
\abs{\psi(Z,k)}
&\leq\binomial{n}{2k}\frac{(2k)!}{k!2^k}\sqrt{G(Z,k)}. \label{e782587}
\end{align}
Now inequalities for $G(Z,k)$ can be used to give upper bounds of 
the right-hand side of \eqref{e782587}. 
For instance, \eqref{e51647} and \eqref{e6721456} imply that
\begin{align}
\abs{\psi(Z,k)}
\leq\binomial{n}{2k}\frac{(2k)!}{k!2^k}\Big(
\frac{1}{n(n-1)}\sum_{(j,r)\in\set{n}_{\neq}^2}\abs{z_{j,r}}^2
\Big)^{k/2}. \label{e1287597}
\end{align}
In \eqref{e1287597}, equality holds if all the 
$z_{j,r}$ for $(j,r)\in\set{n}_{\neq}^2$ are identical.
A further upper bound can immediately be written down by applying
\eqref{e51647}, \eqref{e6721456}, and \eqref{e672545}.
\end{enumerate}
\end{remark}

\subsection{Inequalities for hyperhafnians} 
The last result of this section contains an inequality for 
hyperhafnians. Let $\ell\in\NN$ with $\ell\geq2$, $m\in\NN$, $J$ be a 
set with $\card{J}=n=\ell m$, 
$Z=(z({j_1,\dots,j_\ell}))\in\CC^{J\times\dots\times J}$ be 
an $\ell$-dimensional symmetric matrix, that is $z(j_1,\dots,j_\ell)$ 
is invariant under permutations of $j_1,\dots,j_\ell\in J$. Then the 
hyperhafnian of $Z$ can be defined by 
\begin{align}\label{e762576876}
\haf_\ell(Z)
=\frac{1}{m!(\ell!)^m}\sum_{j\in J_{\neq}^n}\prod_{r=0}^{m-1} 
z(j(r\ell+1),\dots,j(r\ell+\ell)).
\end{align}
For convenience, we set $\haf_\ell(Z)=1$ for 
$Z\in\CC^{J\times \dots\times J}$ 
with $J=\emptyset$. In the case $\ell=2$, \eqref{e762576876} reduces 
to \eqref{e276547}. See \citet[Section 6]{Barvinok1993} and
\citet[Section 5.1]{MR1943369}, for definitions of hyperhafnians  
using slightly different notation.  

\begin{theorem}\label{t7832577}
Let $\ell,m\in\NN$, $n=\ell m$, 
$Z=(z({j_1,\dots,j_\ell}))\in\CC^{\set{n}\times\dots\times\set{n}}$
be an $\ell$-dimensional symmetric matrix, and 
\begin{align*}
G_\ell(Z,k) 
=\frac{1}{\binomial{n}{\ell k}}\sum_{J\in\Subs{\set{n}}{\ell k}}
\ABS{\frac{k!(\ell!)^k}{(\ell k)!}\haf_\ell(Z[J,\dots,J])}^2
\quad \mbox{ for } k\in\{0,\dots,m\}.
\end{align*}
For $k\in\{0,\dots,m\}$, $d\in\NN$, and $w\in\Comp(k,d)$, we then have 
\begin{align*}
G_\ell(Z,k)
\leq\prod_{r=1}^d G_\ell(Z,w_r).
\end{align*} 
In particular, if $k=m$, then
\begin{align*}
\abs{\haf_\ell(Z)}
\leq\frac{n!}{m!(\ell!)^m}\prod_{r=1}^d\sqrt{G_\ell(Z,w_r)}.
\end{align*} 
\end{theorem}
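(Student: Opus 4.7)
The plan is to follow the same template used for multidimensional permanents and hafnians (Theorems~\ref{t45790}, \ref{t257876} and \ref{t78368}): establish a suitable averaged Laplace-type expansion for the hyperhafnian and then invoke the general inequality of Theorem~\ref{t26376566}. To set this up, for every $K\subseteq\set{n}$ with $\ell$ dividing $\card{K}$, I would write $k'=\card{K}/\ell$ and introduce
\begin{align*}
g(K):=\frac{k'!\,(\ell!)^{k'}}{(\ell k')!}\,\haf_\ell(Z[K,\dots,K]),
\end{align*}
so that $g(\emptyset)=1$, $g(\set{n})=\frac{m!(\ell!)^m}{n!}\haf_\ell(Z)$, and
\begin{align*}
G_\ell(Z,k)=\frac{1}{\binomial{n}{\ell k}}\sum_{J\in\Subs{\set{n}}{\ell k}}\abs{g(J)}^2
\end{align*}
for every $k\in\{0,\dots,m\}$.

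The heart of the matter is then the averaged identity: for every $k\in\{0,\dots,m\}$, every $w=(w_1,\dots,w_d)\in\Comp(k,d)$, and every $K\in\Subs{\set{n}}{\ell k}$,
\begin{align*}
g(K)=\frac{1}{\card{\Part(K,\ell w)}}\sum_{(K_1,\dots,K_d)\in\Part(K,\ell w)}\prod_{r=1}^d g(K_r),
\end{align*}
where $\ell w:=(\ell w_1,\dots,\ell w_d)$. To prove it, I would unroll the defining sum \eqref{e762576876} of $\haf_\ell(Z[K,\dots,K])$ and decompose the index range $\set{\ell k}$ into the $d$ consecutive blocks of lengths $\ell w_1,\dots,\ell w_d$: every injection $j\in K_{\neq}^{\ell k}$ is then uniquely a concatenation of injections $j^{(r)}\in(K_r)_{\neq}^{\ell w_r}$, for a uniquely determined ordered partition $(K_1,\dots,K_d)\in\Part(K,\ell w)$. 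Because $g$ absorbs the constants $k'!(\ell!)^{k'}/(\ell k')!$, the inner sum over each $j^{(r)}$ reproduces $(\ell w_r)!\,g(K_r)$, so the reorganization yields $(\ell k)!\,g(K)=\prod_r(\ell w_r)!\sum_{(K_1,\dots,K_d)}\prod_r g(K_r)$; dividing by $(\ell k)!=\card{\Part(K,\ell w)}\prod_r(\ell w_r)!$ gives the claim.

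With this averaged expansion in hand, $g$ fits directly into the framework of Theorem~\ref{t26376566}, which converts such a mean-value factorization into the log-submultiplicative estimate $G_\ell(Z,k)\leq\prod_{r=1}^d G_\ell(Z,w_r)$. The hyperhafnian bound is the $k=m$ specialization: from $G_\ell(Z,m)=\abs{g(\set{n})}^2=\bigl(m!(\ell!)^m/n!\bigr)^2\abs{\haf_\ell(Z)}^2$ one deduces $\abs{\haf_\ell(Z)}^2\leq\bigl(n!/(m!(\ell!)^m)\bigr)^2\prod_r G_\ell(Z,w_r)$ and takes square roots. The main delicate step is the combinatorial bookkeeping in the Laplace expansion, namely verifying how the three families of constants $w_r!$, $(\ell!)^{w_r}$ and $(\ell w_r)!$ combine through the block decomposition; once this is done no further analytic work is needed, since Theorem~\ref{t26376566} does all the heavy lifting.
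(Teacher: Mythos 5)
Your proposal is correct and follows essentially the same route as the paper: your averaged identity for $g$ is exactly Lemma~\ref{l324579} with the normalizing constants $w_r!(\ell!)^{w_r}/(\ell w_r)!$ absorbed into $g$, your concatenation-of-injections argument is the same bijection the paper uses (via Remark~\ref{r28659}) to prove that lemma, and the final step is the same application of Theorem~\ref{t26376566} (in its $\ell=1$ form, Corollary~\ref{c176547}) followed by the $k=m$ specialization. The bookkeeping of the constants checks out.
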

For $\ell=1$, Theorem \ref{t7832577} reduces to Theorem \ref{t78368}. 
The proof can be found in the next section.

\section{Proofs of Theorems \ref{t45790}, \ref{t257876}, 
\ref{t7832577} and  Remark \ref{r2875876}} \label{s7265755}
The proofs of the theorems in Section \ref{s561476} are based on 
the following general theorem. 
\begin{theorem} \label{t26376566} 
Let $d,\ell\in\NN$ and $k,n\in\NN^\ell$ with $k_s\leq n_s$ for all 
$s\in\set{\ell}$. For $s\in\set{\ell}$, let
$A_s$ be a set with cardinality $\card{A_s}=n_s$ and
$w(s)=(w_{1,s},\dots,w_{d,s})\in\Comp(k_s,d)$. Further let 
$g_r:\,\bigtimes_{s=1}^\ell\Subs{A_s}{w_{r,s}}
\longrightarrow\CC$ be a map for all $r\in\set{d}$ and set 
$g=(g_1,\dots,g_d)$.
For $J_s\in\Subs{A_s}{k_s}$, $(s\in\set{\ell})$, let 
\begin{align}\label{e672587}
R(g,J_1,\dots,J_\ell)
&=\sum_{\newatop{(V_{1,1},\dots,V_{d,1})}{\in\Part(J_1,w(1))}}\dots
\sum_{\newatop{(V_{1,\ell},\dots,V_{d,\ell})}{\in
\Part(J_\ell,w(\ell))}}\prod_{r=1}^dg_r(V_{r,1},\dots,V_{r,\ell}).
\end{align}
Then 
\begin{align}
\begin{split}\label{e65463}
\lefteqn{\frac{1}{\prod_{s=1}^\ell\binomial{n_s}{k_s}}
\sum_{J_1\in\Subs{A_1}{k_1}}\dots\sum_{J_\ell\in\Subs{A_\ell}{k_\ell}}
\ABS{\Big(\prod_{s=1}^\ell \frac{w(s)!}{k_s!}\Big)
R(g,J_1,\dots,J_\ell)}^2}\\
&\leq \prod_{r=1}^d\Big(
\frac{1}{\prod_{s=1}^\ell\binomial{n_s}{w_{r,s}}}
\sum_{J_1\in\Subs{A_1}{w_{r,1}}}\dots
\sum_{J_\ell\in\Subs{A_\ell}{w_{r,\ell}}}
\abs{g_r(J_1,\dots,J_\ell)}^2\Big).
\end{split}
\end{align}
In particular, if $k=n$, then 
\begin{align*}
\ABS{R(g,A_1,\dots,A_\ell)} 
&\leq \frac{n!}{\prod_{s=1}^\ell w(s)!}
\prod_{r=1}^d\Big(\frac{1}{\prod_{s=1}^\ell\binomial{n_s}{w_{r,s}}}\\
&\quad{}\times
\sum_{J_1\in\Subs{A_1}{w_{r,1}}}\dots\sum_{J_\ell\in\Subs{A_\ell}{
w_{r,\ell}}}
\abs{g_r(J_1,\dots,J_\ell)}^2\Big)^{1/2}.
\end{align*}
\end{theorem}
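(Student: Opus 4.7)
My plan is to prove Theorem~\ref{t26376566} by induction on the number of parts $d$ of the compositions $w(s)$, with the inductive step relying on a ``squared subset convolution'' inequality of the type alluded to in the introduction (a multivariate strengthening of the bound in \cite[Proposition 3.1]{MR3916882}, presumably isolated as Theorem~\ref{t983467} / Theorem~\ref{c72566} in an earlier section). The base case $d=1$ is immediate: then necessarily $w(s)=(k_s)$ for each $s\in\set{\ell}$, so $w(s)!=k_s!$, each set $\Part(J_s,w(s))$ consists of the single ordered partition $(J_s)$, and $R(g,J_1,\dots,J_\ell)=g_1(J_1,\dots,J_\ell)$. Both sides of \eqref{e65463} then coincide.

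For the inductive step $d\geq 2$, split off the first block of each partition. Setting $\widetilde{w}(s)=(w_{2,s},\dots,w_{d,s})\in\Comp(k_s-w_{1,s},d-1)$, $\widetilde{k}_s=k_s-w_{1,s}$, and $\widetilde{g}=(g_2,\dots,g_d)$, the defining formula \eqref{e672587} factorizes as
\begin{align*}
R(g,J_1,\dots,J_\ell)
=\sum_{V_1\in\Subs{J_1}{w_{1,1}}}\cdots\sum_{V_\ell\in\Subs{J_\ell}{w_{1,\ell}}}
g_1(V_1,\dots,V_\ell)\,\widetilde{R}(\widetilde{g},J_1\setminus V_1,\dots,J_\ell\setminus V_\ell),
\end{align*}
where $\widetilde{R}$ is the quantity analogous to $R$ but for $d-1$ parts and composition $\widetilde{w}$. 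This exhibits $(\prod_s w(s)!/k_s!)R$ as an $\ell$-dimensional subset convolution of $g_1$ against $(\prod_s \widetilde{w}(s)!/\widetilde{k}_s!)\widetilde{R}$, up to an explicit normalization coming from the identity $\binom{n_s}{k_s}\binom{k_s}{w_{1,s}}=\binom{n_s}{w_{1,s}}\binom{n_s-w_{1,s}}{\widetilde{k}_s}$. Applying the key squared-convolution inequality (the ``non-trivial generalization'' from Section~\ref{s64514}) bounds the averaged squared magnitude of this convolution by the product of the averaged squared magnitudes of its two factors, and the induction hypothesis, applied to the factor $\widetilde{R}$ with $d-1$ parts, finishes the iteration and produces the full $d$-fold product on the right-hand side of \eqref{e65463}. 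The ``in particular'' statement for $k=n$ follows by taking $A_s=J_s$ (so there is only one subset to sum over) and extracting the resulting identity $R(g,A_1,\dots,A_\ell)\cdot\prod_s w(s)!/k_s!$ via Jensen/Cauchy--Schwarz.

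The step I expect to be the main obstacle is the squared subset convolution inequality itself. A naive Cauchy--Schwarz inside the displayed factorization produces a sum
\begin{align*}
\sum_{V_s\cap W_s=\emptyset,\,|V_s|=w_{1,s},\,|W_s|=\widetilde{k}_s}\abs{g_1(V_1,\dots,V_\ell)}^2\,\ABS{\prod_s\tfrac{\widetilde{w}(s)!}{\widetilde{k}_s!}\widetilde{R}(\widetilde{g},W_1,\dots,W_\ell)}^2,
\end{align*}
which suffers from a mismatch between the prefactor $1/\prod_s\binom{n_s-w_{1,s}}{\widetilde{k}_s}$ left over after cancelling $\binom{n_s}{w_{1,s}}$ and the desired prefactor $1/\prod_s\binom{n_s}{\widetilde{k}_s}$; since $\binom{n_s-w_{1,s}}{\widetilde{k}_s}\leq\binom{n_s}{\widetilde{k}_s}$, simply discarding the disjointness constraint goes in the wrong direction. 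The real content of the underlying convolution lemma is therefore to use the disjointness constraint \emph{positively}: one must rewrite the ``diagonal'' sum in a form that, after symmetrization, absorbs exactly the deficit $\binom{n_s}{\widetilde{k}_s}/\binom{n_s-w_{1,s}}{\widetilde{k}_s}$. I anticipate that this will be achieved by expanding $\abs{R}^2$ using a second, independent partition of each $J_s$, then re-parametrizing the joint sum over the two partitions by the pair of $V$- and $V'$-blocks together with their intersection, and finally estimating block by block via a telescoping Cauchy--Schwarz; this is what gives the sharp constants and, incidentally, the equality characterization of Remark~\ref{r2875876}.
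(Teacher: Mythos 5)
Your proposal is correct and follows essentially the same route as the paper: induction on $d$, peeling off one block of each partition so that the normalized $R$ becomes an $\ell$-fold subset convolution of a single $g_r$ against the normalized $(d-1)$-part remainder, then applying the multivariate squared-convolution inequality (Theorem \ref{c72566}) and the induction hypothesis; the paper peels off the last block rather than the first, which is immaterial. Only your closing speculation about how the convolution lemma itself is proved diverges from the paper, which establishes Theorem \ref{t983467} by induction on $\card{A}$ via a split $p=p_1+p_2$ at a fixed element, Minkowski's inequality, and exact binomial bookkeeping rather than a second-partition expansion with telescoping Cauchy--Schwarz --- but since that lemma is a separately stated result invoked as a black box here, this does not affect the correctness of your argument for Theorem \ref{t26376566}.
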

The proof of Theorem \ref{t26376566} is given in Section \ref{s64514}. 
The case $\ell=1$ is discussed next. 
\begin{corollary}\label{c176547}
Let $d,k,n\in\NN$ with $k\leq n$, $A$ be a set with $\card{A}=n$, 
$w=(w_{1},\dots,w_{d})\in\Comp(k,d)$, 
$g_r:\,\Subs{A}{w_r}\longrightarrow\CC$ be a map for all $r\in\set{d}$,
and $g=(g_1,\dots,g_d)$. For $J\in\Subs{A}{k}$, let 
\begin{align*}
R(g,J)
=\sum_{\newatop{(V_{1},\dots,V_{d})}{\in\Part(J,w)}}
\prod_{r=1}^dg_r(V_r).
\end{align*}
Then 
\begin{align*} 
\frac{1}{\binomial{n}{k}}
\sum_{J\in\Subs{A}{k}}
\ABS{\frac{w!}{k!}R(g,J)}^2
&\leq \prod_{r=1}^d\Big(\frac{1}{\binomial{n}{w_r}}
\sum_{J\in\Subs{A}{w_r}}\abs{g_r(J)}^2\Big).
\end{align*}
In particular, if $k=n$, then 
\begin{align*}
\abs{R(g,A)} 
&\leq \frac{n!}{w!}\prod_{r=1}^d\Big(\frac{1}{\binomial{n}{w_r}}
\sum_{J\in\Subs{A}{w_r}}\abs{g_r(J)}^2\Big)^{1/2}.
\end{align*}
\end{corollary}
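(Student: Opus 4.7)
The plan is to derive this corollary as an immediate specialization of Theorem \ref{t26376566}; there is no genuinely new content, since all the heavy lifting is done by the general theorem (whose own proof is deferred to Section~\ref{s64514}). What is required here is merely to verify that the $\ell=1$ case of that theorem matches the statement of the corollary on the nose.

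Concretely, I would set $\ell=1$ in Theorem~\ref{t26376566} and identify $A_1=A$, $n_1=n$, $k_1=k$, and $w(1)=w=(w_1,\dots,w_d)$. Under this identification, the outer products $\prod_{s=1}^\ell(\,\cdot\,)$ collapse to single factors, so that $\prod_{s=1}^\ell\binomial{n_s}{k_s}=\binomial{n}{k}$, $\prod_{s=1}^\ell\binomial{n_s}{w_{r,s}}=\binomial{n}{w_r}$, and $\prod_{s=1}^\ell\tfrac{w(s)!}{k_s!}=\tfrac{w!}{k!}$. The $\ell$-fold sums over $(J_1,\dots,J_\ell)$ reduce to single sums over $J\in\Subs{A}{k}$, and the iterated partition sum in \eqref{e672587} reduces to
\begin{align*}
R(g,J)=\sum_{(V_1,\dots,V_d)\in\Part(J,w)}\prod_{r=1}^d g_r(V_r),
\end{align*}
which is exactly the object defined in the corollary. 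Substituting these identifications into \eqref{e65463} produces the first displayed inequality of the corollary verbatim.

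For the $k=n$ statement, I would specialize the already-proved first inequality of the corollary (or, equivalently, the $k=n$ clause of Theorem~\ref{t26376566}) by observing that $\Subs{A}{n}=\{A\}$ and $\binomial{n}{n}=1$, so that the outer average on the left collapses to the single term $|\tfrac{w!}{n!}R(g,A)|^2$. Taking square roots and multiplying by $n!/w!$ then gives the claimed bound on $\abs{R(g,A)}$. Since no new analytic estimate is involved, the only possible obstacle is bookkeeping: making sure that the combinatorial factorials $w!/k!$ and the binomials $\binomial{n}{w_r}$ align correctly between the two statements, which is immediate from the definitions of $\Comp(k,d)$ and $\Part(J,w)$ recalled in the introductory subsection.
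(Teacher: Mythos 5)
Your proposal is correct and matches the paper exactly: the paper presents Corollary \ref{c176547} as the $\ell=1$ specialization of Theorem \ref{t26376566} with no separate argument, and your identifications ($A_1=A$, $n_1=n$, $k_1=k$, $w(1)=w$, collapse of the products and multi-sums, and $\Subs{A}{n}=\{A\}$ for the $k=n$ clause) are precisely the required bookkeeping.
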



The following Lemmata \ref{l486369} and \ref{l324579} 
are proved in Section \ref{s2876563} and
contain generalized Laplace type expansions for multidimensional 
permanents and hyperhafnians, respectively. They indicate how 
to specify $g$ in Theorem \ref{t26376566} in order to prove 
the theorems of Section~\ref{s561476}. 

\begin{lemma} \label{l486369} 
Let $\ell\in\NN$, $J_1,\dots,J_\ell$, $K$ be sets with 
$\card{J_1}=\dots=\card{J_\ell}=\card{K}=k\in\NN$, 
$Z=(z(j_1,\dots,j_\ell,s))
\in\CC^{J_1\times\dots\times J_\ell\times K}$, 
$d\in\NN$, and $w\in\Comp(k,d)$.
For arbitrary $(W_{1},\dots,W_{d})\in\Part(K,w)$, we then
have
\begin{align}\label{e7821967}
\per_\ell(Z)
&=\sum_{\newatop{(V_{1,1},\dots,V_{d,1})}{\in\Part(J_1,w)}}\dots
\sum_{\newatop{(V_{1,\ell},\dots,V_{d,\ell})}{\in\Part(J_\ell,w)}}
\prod_{r=1}^d\per_\ell(Z[V_{r,1},\dots,V_{r,\ell},W_r]).
\end{align}
Furthermore 
\begin{align}\label{e7821968}
\per_\ell(Z)
&=\frac{w!}{k!}
\sum_{\newatop{(V_{1,1},\dots,V_{d,1})}{\in\Part(J_1,w)}}\dots
\sum_{\newatop{(V_{1,\ell},\dots,V_{d,\ell})}{\in\Part(J_\ell,w)}}
\sum_{\newatop{(W_{1},\dots,W_{d})}{\in\Part(K,w)}}
\prod_{r=1}^d\per_\ell(Z[V_{r,1},\dots,V_{r,\ell},W_{r}]).
\end{align}
\end{lemma}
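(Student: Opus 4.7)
The plan is to prove \eqref{e7821967} by a direct combinatorial decomposition of the defining sum \eqref{e8612766}, using the fixed partition $(W_1,\dots,W_d)$ of $K$ to split each injective index map $j^{(s)}\colon K\to J_s$ according to the blocks of $W$; \eqref{e7821968} will then follow from \eqref{e7821967} by symmetrization over $(W_1,\dots,W_d)$, since the left-hand side does not depend on this choice.

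First I would observe that, since $\card{J_s}=\card{K}=k$, every element of $(J_s)^K_{\neq}$ is a bijection. For any $j^{(s)}\in (J_s)^K_{\neq}$ and any $r\in\set{d}$, the image $V_{r,s}:=j^{(s)}(W_r)\subseteq J_s$ has cardinality $w_r$, and the family $(V_{1,s},\dots,V_{d,s})$ is an ordered partition of $J_s$ of type $w$, that is, an element of $\Part(J_s,w)$. Conversely, specifying such an ordered partition together with a bijection $W_r\to V_{r,s}$ for each $r$ uniquely reconstructs $j^{(s)}$. This yields the identification
\begin{align*}
\sum_{j^{(s)}\in (J_s)^K_{\neq}}
=\sum_{(V_{1,s},\dots,V_{d,s})\in\Part(J_s,w)}\;
\prod_{r=1}^{d}\sum_{\widetilde{j}^{(s)}_r\in (V_{r,s})^{W_r}_{\neq}}.
\end{align*}

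Applying this identification to each $s\in\set{\ell}$ in \eqref{e8612766}, and regrouping the product $\prod_{k\in K}$ as $\prod_{r=1}^{d}\prod_{k\in W_r}$ using that $(W_1,\dots,W_d)$ is a partition of $K$, I obtain
\begin{align*}
\per_\ell(Z)
&=\sum_{(V_{1,1},\dots,V_{d,1})\in\Part(J_1,w)}\cdots
\sum_{(V_{1,\ell},\dots,V_{d,\ell})\in\Part(J_\ell,w)}
\prod_{r=1}^{d}\Bigl[\,\sum_{\widetilde{j}^{(1)}_r\in(V_{r,1})^{W_r}_{\neq}}\cdots
\sum_{\widetilde{j}^{(\ell)}_r\in(V_{r,\ell})^{W_r}_{\neq}} \\
&\qquad\qquad\qquad\times
\prod_{k\in W_r} z\bigl(\widetilde{j}^{(1)}_{r}(k),\dots,\widetilde{j}^{(\ell)}_{r}(k),k\bigr)\Bigr].
\end{align*}
The inner bracket is exactly $\per_\ell(Z[V_{r,1},\dots,V_{r,\ell},W_r])$ by the definition \eqref{e8612766} applied to the submatrix, which proves \eqref{e7821967}.

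For \eqref{e7821968}, the right-hand side of \eqref{e7821967} equals $\per_\ell(Z)$ for every choice of $(W_1,\dots,W_d)\in\Part(K,w)$. Summing over all $\card{\Part(K,w)}=k!/w!$ such partitions and dividing by this count immediately yields \eqref{e7821968}. The only point requiring care throughout is verifying that the block-by-block decomposition of bijections is itself a bijection (no double counting, no omitted configurations); once this is in place, both formulas are essentially bookkeeping. I expect this bijectivity step together with the reordering of the product over $K$ to be the main conceptual ingredient, while the averaging step for \eqref{e7821968} is routine.
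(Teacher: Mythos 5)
Your proof is correct and follows essentially the same route as the paper's: both rest on the correspondence between bijections $K\to J_s$ and pairs consisting of an ordered partition of $J_s$ of type $w$ together with block-wise bijections (the content of the paper's Remark \ref{r28659}), followed by regrouping the product over $K=\bigcup_r W_r$ and averaging over $\Part(K,w)$ for \eqref{e7821968}. The only cosmetic difference is that you run the decomposition forward as an exact bijection of index sets, whereas the paper starts from the partition sum and corrects for the $w!$-fold overcounting; the combinatorial substance is identical.
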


In the case $\ell=1$, Lemma \ref{l486369} simplifies to the 
following corollary. 
\begin{corollary}  
Let  $J$ and $K$ be sets with $\card{J}=\card{K}=k\in\NN$, 
$Z=(z_{j,r})\in\CC^{J\times K}$, $d\in\NN$, and $w\in\Comp(k,d)$. 
For arbitrary $(W_1,\dots,W_d)\in\Part(K,w)$, we then have
\begin{align}\label{e7821965}
\per(Z)
&=\sum_{\newatop{(V_1,\dots,V_d)}{\in\Part(J,w)}}
\prod_{r=1}^d\per(Z[V_r,W_r]). 
\end{align}
Furthermore 
\begin{align*} 
\per(Z)
&=\frac{w!}{k!}\sum_{\newatop{(V_{1},\dots,V_{d})}{\in\Part(J,w)}}
\sum_{\newatop{(W_{1},\dots,W_{d})}{\in\Part(K,w)}}
\prod_{r=1}^d\per(Z[V_{r},W_{r}]).
\end{align*}
\end{corollary}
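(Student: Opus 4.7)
The plan is to establish both identities by reindexing the sum in the definition of the permanent according to a fixed partition of the column set, exploiting the fact that a bijection $K\to J$ is uniquely determined by the images of each block of such a partition, together with the bijective restrictions to each block.

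First, starting from $\per(Z)=\sum_{j\in J^K_{\neq}}\prod_{s\in K}z_{j_s,s}$, I would split the product over $s\in K$ according to the fixed blocks $W_1,\dots,W_d$, writing
\begin{align*}
\per(Z)=\sum_{j\in J^K_{\neq}}\prod_{r=1}^d\prod_{s\in W_r}z_{j_s,s}.
\end{align*}
For any bijection $j\colon K\to J$, the tuple $(j(W_1),\dots,j(W_d))$ is an ordered partition of $J$ of type $w$, and conversely $j$ is recovered from this image partition together with the restricted bijections $j|_{W_r}\colon W_r\to j(W_r)$. Reindexing accordingly --- first choosing $(V_1,\dots,V_d)\in\Part(J,w)$ and then, independently for each $r$, choosing a bijection $j^{(r)}\in (V_r)^{W_r}_{\neq}$ --- produces
\begin{align*}
\per(Z)
&=\sum_{(V_1,\dots,V_d)\in\Part(J,w)}\prod_{r=1}^d\Bigl(\sum_{j^{(r)}\in (V_r)^{W_r}_{\neq}}\prod_{s\in W_r}z_{j^{(r)}_s,s}\Bigr)\\
&=\sum_{(V_1,\dots,V_d)\in\Part(J,w)}\prod_{r=1}^d\per(Z[V_r,W_r]),
\end{align*}
which is \eqref{e7821965}.

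For the second identity, I would sum \eqref{e7821965} over all $(W_1,\dots,W_d)\in\Part(K,w)$. Since the left-hand side does not depend on the choice of $W$, the resulting sum equals $\card{\Part(K,w)}\cdot\per(Z)=(k!/w!)\,\per(Z)$, using the cardinality formula $\card{\Part(M,w)}=m!/w!$ recorded in the preceding subsection on weak compositions; dividing through by $k!/w!$ then gives the stated equality.

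I do not anticipate any genuine obstacle here: the argument reduces to careful bookkeeping of the correspondence between bijections $K\to J$ and pairs consisting of a type-$w$ ordered partition of $J$ together with a family of block-to-block bijections. Indeed, the corollary is precisely the $\ell=1$ specialization of Lemma~\ref{l486369} (upon identifying $\per_1$ with $\per$), so an alternative one-line proof is simply to invoke that lemma.
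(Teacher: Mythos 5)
Your proof is correct and matches the paper's approach: the paper obtains this corollary as the $\ell=1$ case of Lemma~\ref{l486369}, whose proof is precisely your reindexing of bijections $K\to J$ via the image partition $(j[W_1],\dots,j[W_d])$ and the block restrictions (Remark~\ref{r28659}), followed by averaging over $(W_1,\dots,W_d)\in\Part(K,w)$ for the second identity. Both your direct argument and your one-line appeal to the lemma are sound.
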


Identity \eqref{e7821965} can be found in 
\citet[(21.30) on page 559]{MR1440179}.
For $d=2$, it simplifies to the Laplace expansion for 
permanents given in \citet[Theorem 1.2, page 16]{MR504978}. 
\begin{lemma}\label{l324579}
Let $\ell,k\in\NN$, $J$ be a set with $\card{J}=\ell k$, 
$Z=(z({j_1,\dots,j_\ell}))\in\CC^{J\times\dots\times J}$
be a symmetric matrix, $d\in\NN$, and $w\in\Comp(k,d)$. Then we have
\begin{align*}
\haf_\ell(Z)
&=\frac{w!}{k!}
\sum_{\newatop{(V_1,\dots,V_d)}{\in\Part(J,\ell w)}}
\prod_{r=1}^d\haf_\ell(Z[V_r,\dots,V_r]).
\end{align*}
\end{lemma}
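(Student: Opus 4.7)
The plan is to unfold the definition of $\haf_\ell(Z)$ as a sum over bijections $j\in J_{\neq}^n$ (with $n=\ell k$), and to regroup the terms according to how $j$ distributes the positions into $d$ ``super-blocks''. Concretely, partition the position set $\set{n}$ into consecutive blocks $B_1,\dots,B_d$ with $\card{B_r}=\ell w_r$, where
\[
B_r=\{\ell(w_1+\dots+w_{r-1})+1,\dots,\ell(w_1+\dots+w_r)\}.
\]
Note that $B_r$ contains exactly $w_r$ consecutive groups of $\ell$ positions, which is precisely what is needed for the $\ell$-wise product inside $\haf_\ell$ to split cleanly across the super-blocks.

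For each $j\in J_{\neq}^n$, set $V_r=j(B_r)\subseteq J$. Since $j$ is a bijection and the $B_r$ partition $\set{n}$, the tuple $(V_1,\dots,V_d)$ lies in $\Part(J,\ell w)$. Conversely, for a fixed $(V_1,\dots,V_d)\in\Part(J,\ell w)$, the bijections $j$ inducing this partition are exactly the concatenations of bijections $j_r:B_r\to V_r$. Hence the sum in the definition of $\haf_\ell(Z)$ can be rewritten as
\[
\sum_{j\in J_{\neq}^n}\prod_{r=0}^{k-1}z(j(r\ell+1),\dots,j(r\ell+\ell))
=\sum_{(V_1,\dots,V_d)\in\Part(J,\ell w)}\prod_{r=1}^d S_r(V_r),
\]
where $S_r(V_r)$ denotes the sum over bijections $j_r:B_r\to V_r$ of the corresponding product of $w_r$ factors $z(\cdot)$ indexed by the $w_r$ groups of $\ell$ consecutive positions in $B_r$.

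Next, each inner sum $S_r(V_r)$ is, after a harmless re-indexing of $B_r$ by $\{1,\dots,\ell w_r\}$ (which does not change the summand, as the product structure depends only on the grouping of positions into successive blocks of $\ell$), precisely the unnormalized sum appearing in the definition of $\haf_\ell(Z[V_r,\dots,V_r])$. Comparing with \eqref{e762576876} applied to the symmetric matrix $Z[V_r,\dots,V_r]$ of size $\ell w_r$, one gets $S_r(V_r)=w_r!(\ell!)^{w_r}\haf_\ell(Z[V_r,\dots,V_r])$.

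Collecting the prefactors, $\prod_{r=1}^d w_r!(\ell!)^{w_r}=w!(\ell!)^{k}$, and dividing by the $k!(\ell!)^k$ from the definition of $\haf_\ell(Z)$ yields exactly the factor $w!/k!$ stated in the lemma. The only point requiring care is the re-indexing step, i.e.\ verifying that relabeling $B_r$ by $\{1,\dots,\ell w_r\}$ leaves the summand invariant; this is immediate because the product inside $\haf_\ell$ only sees the partition of positions into successive $\ell$-tuples, and the $\ell$-wise symmetry of $Z$ is already absorbed into the normalization $(\ell!)^{w_r}$ via the very definition \eqref{e762576876}.
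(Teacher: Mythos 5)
Your proof is correct and follows essentially the same route as the paper's: expand $\haf_\ell(Z)$ as a sum over bijections $j\in J_{\neq}^{\ell k}$, group the terms by the induced partition $(V_1,\dots,V_d)$ of $J$, factor the inner sum into sub-hyperhafnians, and match the normalizations $\prod_r w_r!(\ell!)^{w_r}=w!(\ell!)^k$. The only (harmless) difference is that you fix the consecutive-block partition of the position set, which streamlines the paper's bookkeeping with an arbitrary position partition $(W_1,\dots,W_d)$ and the associated permutation $t$; your re-indexing step is exactly the point the paper handles via its Remark on images $j[W_r]$, and you justify it adequately.
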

\begin{remark} 
Under the assumptions of Lemma \ref{l324579},
we get in the case $\ell=d=2$ and $w=(1,k-1)$ that 
\begin{align*}
\haf(Z)
&=\frac{1}{2k}\sum_{(j,k)\in J_{\neq}^2}z_{j,k}
\haf(Z[J\setminus \{j,k\},J\setminus \{j,k\}]),
\end{align*}
which also follows from the more general identity 
\begin{align*}
\haf(Z)
&=\sum_{k\in J\setminus\{j\}}z_{j,k}
\haf(Z[J\setminus \{j,k\},J\setminus \{j,k\}]) \quad \mbox{ for } 
j \in J;
\end{align*}
for instance, see \citet[(4.1.1.3) on page 94]{MR3558532}.
\end{remark}


\begin{proof}[Proof of Theorem \ref{t45790}]
Let $k\in\{0,\dots,m\}$, $K\in\Subs{M}{k}$, $d\in\NN$, 
$W\in\Part_d(K)$, and $w=(w_1,\dots,w_d)=\type(W)$. 
Then $w\in\Comp(k,d)$ and $W\in\Part(K,w)$. 
We let $g_r:\,(\Subs{\set{n}}{w_r})^\ell\longrightarrow\CC$ 
for $r\in\set{d}$, where
$g_r(V_{r,1},\dots,V_{r,\ell})=\frac{1}{(w_r!)^\ell}
\per_\ell(Z[V_{r,1},\dots,V_{r,\ell},W_r])$ for
$V_{r,1},\dots,V_{r,\ell}\in\Subs{\set{n}}{w_r}$.  
For $J_1,\dots,J_\ell\in\Subs{\set{n}}{k}$, we obtain from 
\eqref{e7821967} that
\begin{align}
\lefteqn{\frac{1}{(w!)^\ell}\per_\ell(Z[J_1,\dots,J_\ell,K])}
\nonumber\\
&=\frac{1}{(w!)^\ell}
\sum_{\newatop{(V_{1,1},\dots,V_{d,1})}{\in\Part(J_1,w)}}\dots
\sum_{\newatop{(V_{1,\ell},\dots,V_{d,\ell})}{\in\Part(J_\ell,w)}}
\prod_{r=1}^d\per_\ell(Z[V_{r,1},\dots,V_{r,\ell},W_r])\nonumber\\
&=\sum_{\newatop{(V_{1,1},\dots,V_{d,1})}{\in\Part(J_1,w)}}\dots
\sum_{\newatop{(V_{1,\ell},\dots,V_{d,\ell})}{\in\Part(J_\ell,w)}}
\prod_{r=1}^dg_r(V_{r,1},\dots,V_{r,\ell})\nonumber\\
&=:R(g,J_1,\dots,J_\ell),\label{e726345}
\end{align}
where $g=(g_1,\dots,g_d)$. By \eqref{e782566}, \eqref{e726345}, and 
Theorem \ref{t26376566}, we get  
\begin{align*}
f_\ell(Z,K) 
&=\frac{1}{\binomial{n}{k}^\ell}\sum_{J_1\in\Subs{\set{n}}{k}}
\dots\sum_{J_\ell\in\Subs{\set{n}}{k}}
\ABS{\frac{1}{(k!)^\ell}\per_\ell(Z[J_1,\dots,J_\ell,K])}^2\\
&=\frac{1}{\binomial{n}{k}^\ell}\sum_{J_1\in\Subs{\set{n}}{k}}
\dots\sum_{J_\ell\in\Subs{\set{n}}{k}}
\ABS{\Big(\frac{w!}{k!}\Big)^\ell R(g,J_1,\dots,J_\ell)}^2\\
&\leq \prod_{r=1}^d\Big(\frac{1}{\binomial{n}{w_r}^\ell}
\sum_{J_1\in\Subs{\set{n}}{w_{r}}}\dots
\sum_{J_\ell\in\Subs{\set{n}}{w_{r}}}\abs{g_r(J_1,\dots,J_\ell)}^2
\Big)\\
&=\prod_{r=1}^d\Big(\frac{1}{\binomial{n}{w_r}^\ell}
\sum_{J_1\in\Subs{\set{n}}{w_{r}}}\dots
\sum_{J_\ell\in\Subs{\set{n}}{w_{r}}}
\ABS{\frac{1}{(w_r!)^\ell}\per_\ell(Z[J_1,\dots,J_\ell,W_r])}^2\Big)\\
&=\prod_{r=1}^d f_\ell(Z,W_r).\qedhere
\end{align*}
\end{proof}


\begin{proof}[Proof of Remark \ref{r2875876}]
Let the assumptions of Theorem \ref{t45790} hold. 
If condition \ref{r2875876.i} of Remark \ref{r2875876} is true, then 
both sides of \eqref{e84378} are equal to zero. Now suppose that 
condition \ref{r2875876.ii} is true. Let 
$k\in\set{m}$, $K\in\Subs{M}{k}$, $d\in\NN$, $W\in\Part_d(K)$, and
$w=\type(W)\in\Comp(k,d)$. Using \eqref{e7821967}, we obtain for 
$J_1,\dots,J_\ell\in\Subs{\set{n}}{k}$ that 
\begin{align*}
\per_\ell(Z[J_1,\dots,J_\ell,K])
&=\sum_{\newatop{(V_{1,1},\dots,V_{d,1})}{\in\Part(J_1,w)}}\dots
\sum_{\newatop{(V_{1,\ell},\dots,V_{d,\ell})}{\in\Part(J_\ell,w)}}
\prod_{r=1}^dy_r 
=\Big(\frac{k!}{w!}\Big)^\ell \prod_{r=1}^dy_r. 
\end{align*}
From \eqref{e782566}, we then get 
\begin{align*}
f_\ell(Z,K)
&=\prod_{r=1}^d\frac{\abs{y_r}^2}{(w_r!)^{2\ell}}
=\prod_{r=1}^d\Big(\frac{1}{\binomial{n}{w_r}^\ell}
\sum_{J_1\in\Subs{\set{n}}{w_r}}\dots
\sum_{J_\ell\in\Subs{\set{n}}{w_r}}
\ABS{\frac{y_r}{(w_r!)^\ell}}^2\Big)\\
&=\prod_{r=1}^d f_\ell(Z,W_r). \qedhere
\end{align*}
\end{proof}

\begin{proof}[Proof of Theorem \ref{t257876}]
Let $g_r:\,(\Subs{\set{n}}{w_r})^{\ell}\times\Subs{M}{w_r}
\longrightarrow\CC$ with  
$g_r(V_{r,1},\dots,V_{r,\ell+1})
=\frac{1}{(w_r!)^\ell}\per_\ell(Z[V_{r,1},\dots,V_{r,\ell+1}])$ 
for $r\in\set{d}$, $V_{r,1},\dots,V_{r,\ell}\in\Subs{\set{n}}{w_r}$, 
and $V_{r,\ell+1}\in\Subs{M}{w_r}$. For 
$J_1,\dots,J_\ell\in\Subs{\set{n}}{k}$ and $J_{\ell+1}\in\Subs{M}{k}$, 
\eqref{e7821968} implies that 
\begin{align}
\lefteqn{\frac{k!}{(w!)^{\ell+1}}
\per_\ell(Z[J_1,\dots,J_{\ell+1}])}\nonumber\\
&=\frac{1}{(w!)^\ell}
\sum_{\newatop{(V_{1,1},\dots,V_{d,1})}{\in\Part(J_1,w)}}\dots
\sum_{\newatop{(V_{1,\ell+1},\dots,V_{d,\ell+1})}{
\in\Part(J_{\ell+1},w)}}
\prod_{r=1}^d\per_\ell(Z[V_{r,1},\dots,V_{r,\ell+1}])\nonumber\\
&=\sum_{\newatop{(V_{1,1},\dots,V_{d,1})}{\in\Part(J_1,w)}}\dots
\sum_{\newatop{(V_{1,\ell+1},\dots,V_{d,\ell+1})}{
\in\Part(J_{\ell+1},w)}}
\prod_{r=1}^dg_r(V_{r,1},\dots,V_{r,\ell+1})
=: R(g,J_1,\dots,J_{\ell+1}),\label{e7870}
\end{align}
where $g=(g_1,\dots,g_d)$. Then, \eqref{t257877}, \eqref{e7870}, and 
Theorem \ref{t26376566} give
\begin{align*}
F_\ell(Z,k)
&=\frac{1}{\binomial{n}{k}^\ell\binomial{m}{k}}
\sum_{J_1\in\Subs{\set{n}}{k}}
\dots\sum_{J_\ell\in\Subs{\set{n}}{k}}
\sum_{J_{\ell+1}\in\Subs{M}{k}}
\ABS{\frac{1}{(k!)^\ell}\per_\ell(Z[J_1,\dots,J_{\ell+1}])}^2\\
&=\frac{1}{\binomial{n}{k}^\ell\binomial{m}{k}}
\sum_{J_1\in\Subs{\set{n}}{k}}\dots\sum_{J_\ell\in\Subs{\set{n}}{k}}
\sum_{J_{\ell+1}\in\Subs{M}{k}}
\ABS{
\Big(\frac{w!}{k!}\Big)^{\ell+1}R(g,J_1,\dots,J_{\ell+1})}^2\\
&\leq \prod_{r=1}^d
\Big(\frac{1}{\binomial{n}{w_r}^\ell\binomial{m}{w_r}}
\sum_{J_1\in\Subs{\set{n}}{w_r}}\dots
\sum_{J_\ell\in\Subs{\set{n}}{w_r}}\sum_{J_{\ell+1}\in\Subs{M}{w_r}}
\abs{g_r(J_1,\dots,J_{\ell+1})}^2\Big)\\
&=\prod_{r=1}^dF_\ell(Z,w_r).\qedhere
\end{align*}
\end{proof}

\begin{proof}[Proof of Theorem \ref{t7832577}]
Let $k\in\{0,\dots,m\}$, $d\in\NN$, and $w\in\Comp(k,d)$. 
Further, let $g_r:\,\Subs{\set{n}}{\ell w_r}\longrightarrow\CC$ with
$g_r(V_r)=\frac{w_r!(\ell!)^{w_r}}{(\ell w_r)!}
\haf_\ell(Z[V_r,\dots,V_r])$ for 
$r\in\set{d}$, $V_r\in\Subs{\set{n}}{\ell w_r}$. 
For $J\in\Subs{\set{n}}{\ell k}$, we get from Lemma \ref{l324579} that
\begin{align}
\frac{k!(\ell!)^k}{(\ell w)!}\haf_\ell(Z[J,\dots,J])
&=\frac{(\ell!)^kw!}{(\ell w)!}
\sum_{\newatop{(V_1,\dots,V_d)}{\in\Part(J,\ell w)}}
\prod_{r=1}^d\haf_\ell(Z[V_r,\dots,V_r])\nonumber\\
&=\sum_{\newatop{(V_1,\dots,V_d)}{\in\Part(J,\ell w)}}
\prod_{r=1}^dg_r(V_r)
=:R(g,J),\label{e7625}
\end{align}
where $g=(g_1,\dots,g_d)$. Therefore, \eqref{e7625} and 
Corollary \ref{c176547} give
\begin{align*}
G_\ell(Z,k)
&=\frac{1}{\binomial{n}{\ell k}}\sum_{J\in\Subs{\set{n}}{\ell k}}
\ABS{\frac{(\ell w)!}{(\ell k)!}R(g,J)}^2\\
&\leq \prod_{r=1}^d\Big(\frac{1}{\binomial{n}{\ell w_r}}
\sum_{J\in\Subs{\set{n}}{\ell w_r}}\abs{g_r(J)}^2\Big)
=\prod_{r=1}^dG_\ell(Z,w_r). \qedhere
\end{align*} 
\end{proof}

\section{An auxiliary inequality and proof of Theorem \ref{t26376566}}
\label{s64514}

The proof of  Theorem \ref{t26376566} requires the following 
result. 
\begin{theorem}\label{t983467}
Let $n\in\NN$,  $A$ be a set with $\card{A}=n$, 
$j,k\in\Zpl$ with $j\leq k\leq n$, and
$g:\,\Subs{A}{j}\longrightarrow[0,\infty)$ and 
$h:\,\Subs{A}{k-j}\longrightarrow[0,\infty)$ be two maps. 
For $J\in\Subs{A}{k}$, let 
\begin{align*}
p(J)
&=(g*_j h)(J)
=\sum_{I\in\Subs{J}{j}}g(I)h(J\setminus I).
\end{align*}
Then 
\begin{align}\label{e717659}
\frac{1}{\binomial{n}{k}}
\sum_{J\in\Subs{A}{k}}
\Big(\frac{p(J)}{\binomial{k}{j}}\Big)^2
&\leq \Big(\frac{1}{\binomial{n}{j}}
\sum_{I\in\Subs{A}{j}}g(I)^2
\Big)\Big(\frac{1}{\binomial{n}{k-j}}
\sum_{J\in\Subs{A}{k-j}} h(J)^2\Big).
\end{align}
\end{theorem}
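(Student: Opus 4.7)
The plan is to prove Theorem~\ref{t983467} via a refined Cauchy--Schwarz argument combined with binomial identities. The first thing to check is that the naive pointwise Cauchy--Schwarz
\[
p(J)^{2}=\Bigl(\sum_{I\in\Subs{J}{j}}g(I)h(J\setminus I)\Bigr)^{2}
\leq\binom{k}{j}\sum_{I\in\Subs{J}{j}}g(I)^{2}h(J\setminus I)^{2}
\]
yields, after summing over $J$ and exchanging the order of summation, the bound
$\sum_{J}p(J)^{2}\leq\binom{k}{j}\sum_{(I,J')\text{ disjoint}}g(I)^{2}h(J')^{2}\leq\binom{k}{j}\sum_{I}g(I)^{2}\sum_{J'}h(J')^{2}$,
which is strictly weaker than \eqref{e717659} by the factor $\binom{n}{k-j}/\binom{n-j}{k-j}\geq 1$ (as can be seen by comparing the explicit constants using $\binom{n}{k}\binom{k}{j}=\binom{n}{j}\binom{n-j}{k-j}$). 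Thus the naive Cauchy--Schwarz is too crude: it ignores the interaction between the disjointness constraint on $(I,J')$ and the $k$-subset structure on $J$.

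The proposed route is to regard $\sum_{J}p(J)^{2}$ as a positive-semidefinite quadratic form in $g$. Expanding the square and exchanging summations,
\begin{align*}
\sum_{J\in\Subs{A}{k}}p(J)^{2}
=\sum_{I_{1},I_{2}\in\Subs{A}{j}}g(I_{1})g(I_{2})\,B(I_{1},I_{2}),
\qquad
B(I_{1},I_{2})=\sum_{\substack{J\supseteq I_{1}\cup I_{2}\\|J|=k}}h(J\setminus I_{1})h(J\setminus I_{2}).
\end{align*}
Here $B$ is the Gram matrix of the vectors $\{J\mapsto h(J\setminus I)\mathbf{1}_{I\subseteq J}:\, I\in\Subs{A}{j}\}$ in $\ell^{2}(\Subs{A}{k})$, hence positive semidefinite. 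The inequality reduces to a sharp operator-norm estimate $\langle g,Bg\rangle\leq C_{n,k,j}\sum_{I}g(I)^{2}\sum_{J'}h(J')^{2}$. To achieve this I would stratify the sum over $(I_{1},I_{2})$ by the overlap $s=|I_{1}\cap I_{2}|$, apply Cauchy--Schwarz inside $B(I_{1},I_{2})$ to the inner summation variable $K=J\setminus(I_{1}\cup I_{2})$, and use the binomial identities $\binom{n}{k}\binom{k}{j}=\binom{n}{j}\binom{n-j}{k-j}$ and $\binom{k}{j}\binom{j}{s}=\binom{k}{s}\binom{k-s}{j-s}$ (and their companions at the intermediate overlap levels) to telescope the contributions of the different strata.

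The main obstacle will be the combinatorial bookkeeping needed to recover the missing factor $\binom{n}{k-j}/\binom{n-j}{k-j}$: one must pair up terms across different values of $s$ so that the cancellation is transparent. A potentially cleaner alternative is induction on $k$ (or on $n-k$): isolate a distinguished element $a\in A$, decompose $p(J)$ according to whether $a\in I$ and whether $a\in J$, apply the inductive hypothesis to the sub-instances on $\Subs{A\setminus\{a\}}{\,\cdot\,}$, and average over the choice of $a$ to produce the desired constant. In either approach, the non-negativity of $g$ and $h$ is essential, as it lets us pass from bilinear to quadratic estimates without sign issues; the equality characterisation referred to in Remark~\ref{r28759} (attained, for instance, when $g$ and $h$ are both constant) provides a convenient sanity check that the constant $C_{n,k,j}$ is identified correctly.
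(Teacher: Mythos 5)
Your diagnosis of why the pointwise Cauchy--Schwarz bound loses the factor $\binomial{n}{k-j}/\binomial{n-j}{k-j}$ is correct, and your second alternative (induction with a distinguished element) points in the direction of the paper's actual argument, which is an induction on $n$. But as written there is a genuine gap: neither route is carried out, and the step you defer as ``the main obstacle'' is precisely the mathematical content of the theorem. In the paper's proof one fixes a single $m\in A$ (no averaging over the choice of $m$ is needed or used), splits $\sum_{J}p(J)^2$ according to whether $m\in J$, writes $p(J\cup\{m\})=p_1(J)+p_2(J)$ according to whether $m$ lies in $I$, applies Minkowski to $(p_1+p_2)^2$, and invokes the induction hypothesis on $A\setminus\{m\}$ at the three parameter pairs $(j,k)$, $(j-1,k-1)$, $(j,k-1)$. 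The decisive point is that simply adding the three resulting bounds does \emph{not} yield the claimed constant: with $a,c$ (resp.\ $b,d$) the squared $\ell^2$ masses of $g$ (resp.\ $h$) split according to membership of $m$, one is left with a cross term $u=2\sqrt{j(n-j)(k-j)(n-k+j)abcd}$ that must be absorbed exactly. The paper does this via $u\leq\min\{v,w\}\leq\frac{k}{n}v+\frac{n-k}{n}w$ for two specific auxiliary quantities $v=j(k-j)ab+(n-j)(n-k+j)cd$ and $w=(k-j)(n-j)bc+j(n-k+j)ad$, followed by the verification that three coefficient combinations all collapse to $\frac{k}{n}(n-j)(n-k+j)$, which is what produces the factorization $(a+c)(b+d)$. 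Nothing in your outline supplies this step, and without it the induction does not close.

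Your first route has an additional structural problem: the kernel $B(I_1,I_2)$ depends on $h$, so the required ``operator-norm estimate'' is not a bound on a fixed matrix but a joint estimate in $g\otimes g$ and $h\otimes h$ simultaneously --- essentially the original inequality in disguise. Stratifying by the overlap $s=\card{I_1\cap I_2}$ and applying Cauchy--Schwarz within each stratum gives bounds that are tight for different extremizers in different strata, so summing them naively overshoots the sharp constant; the ``pairing up of terms across different values of $s$'' that you anticipate is exactly the cancellation you have not exhibited. I recommend developing your second alternative into the induction on $n$ described above; your sanity checks (constant $g,h$, and the case $k=n$, where the statement reduces to Cauchy--Schwarz) are indeed the right ones, cf.\ Remark \ref{r28759}.
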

For the proofs of the theorems given in this section and the one of
Remark \ref{r28759} below, see Section \ref{s1534215}. 
 
Let the assumptions of Theorem \ref{t983467} hold. 
Suppose that the functions $g$ and $h$ are extended to the power set 
$2^A$ of $A$ such that $g(I)=0$ for all $I\subseteq A$ with 
$\card{I}\neq j$ and $h(J)=0$ for all $J\subseteq A$ with 
$\card{J}\neq k-j$. For $J\in\Subs{A}{k}$, we then have 
$(g*_j h)(J)=\sum_{I\subseteq J}g(I)h(J\setminus I)$, 
where the latter expression is called the subset convolution of $g$
and $h$, e.g.\ see \citet{MR2402429}. 

Theorem \ref{t983467} and Remark \ref{r28759} below are non-trivial
generalizations of \citet[Proposition 3.1 and Remark 3.1]{MR3916882}.
In fact, in \cite{MR3916882}, it was assumed that there are numbers 
$g_i\in[0,\infty)$ for $i\in A$ such that $g(I)=\prod_{i\in I}g_i$
for all $I\in\Subs{A}{j}$. 
We have not been able to generalize the method of proof of
\cite[Proposition 3.1]{MR3916882} to give a proof of 
Theorem \ref{t983467}.  

\begin{remark}\label{r28759} 
Let the assumptions of Theorem \ref{t983467} hold. 
In \eqref{e717659}, equality holds if and only if at least one of the 
following five conditions is valid: 
\begin{enumerate}[(i)]

\item \label{r28759.i} $j\in\{0, k\}$ or 

\item  $g(I)=0$ for all $I\in\Subs{A}{j}$ or 

\item  $h(J)=0$ for all $J\in\Subs{A}{k-j}$ or  

\item \label{r28759.iv} $k=n$ and a number $x\in[0,\infty)$ exists 
such that $g(I)=xh(A\setminus I)$ for all 
$I\in\Subs{A}{j}$ or 

\item \label{r28759.v} $g(I)=g(I')$ for all $I,I'\in\Subs{A}{j}$
and $h(J)=h(J')$ for all $J,J'\in\Subs{A}{k-j}$. 

\end{enumerate}
\end{remark}

The following result is a consequence of Theorem \ref{t983467}.
But it also reduces to this theorem for $\ell=1$. 

\begin{theorem} \label{c72566}
Let $\ell\in\NN$, $n\in\NN^\ell$, 
$A_s$ be a set with $\card{A_s}=n_s$ for all $s\in\set{\ell}$,
$j,k\in\Zpl^\ell$ with $j_s\leq k_s\leq n_s$ for all 
$s\in\set{\ell}$, and 
$g:\,\bigtimes_{s=1}^\ell\Subs{A_s}{j_s}\longrightarrow[0,\infty)$ and
$h:\,\bigtimes_{s=1}^\ell\Subs{A_s}{k_s-j_s}
\longrightarrow[0,\infty)$ be two maps. 
For $J_s\in\Subs{A_s}{k_s}$, $(s\in\set{\ell})$,  let  
\begin{align*}
p(J_1,\dots,J_\ell)
&=(g*_j h)(J_1,\dots,J_\ell)
=\sum_{I_1\in\Subs{J_1}{j_1}}\dots\sum_{I_\ell\in\Subs{J_\ell}{j_\ell}}
g(I_1,\dots,I_\ell)h(J_1\setminus I_1,\dots,J_\ell\setminus I_\ell).
\end{align*}
Then 
\begin{align}\label{e78369888}
\begin{split}
\lefteqn{\frac{1}{\prod_{s=1}^\ell\binomial{n_s}{k_s}}
\sum_{J_1\in\Subs{A_1}{k_1}}
\dots\sum_{J_\ell\in\Subs{A_\ell}{k_\ell}}\Big(
\frac{p(J_1,\dots,J_\ell)}{
\prod_{s=1}^\ell\binomial{k_s}{j_s}}\Big)^2}\\
&\leq \Big(\frac{1}{\prod_{s=1}^\ell\binomial{n_s}{j_s}}
\sum_{I_1\in\Subs{A_1}{j_1}}\dots\sum_{I_\ell\in\Subs{A_\ell}{j_\ell}}
g(I_1,\dots,I_\ell)^2\Big)\\
&\quad{}\times
\Big(\frac{1}{\prod_{s=1}^\ell\binomial{n_s}{k_s-j_s}}
\sum_{J_1\in\Subs{A_1}{k_1-j_1}}
\dots\sum_{J_\ell\in\Subs{A_\ell}{k_\ell-j_\ell}}
h(J_1,\dots,J_\ell)^2\Big).
\end{split}
\end{align}
\end{theorem}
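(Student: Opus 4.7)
The plan is to proceed by induction on $\ell$. The base case $\ell=1$ is exactly Theorem \ref{t983467}, so assume the result of Theorem \ref{c72566} holds in dimension $\ell-1$. For each pair $(I_1,L_1)$ with $I_1\in\Subs{A_1}{j_1}$ and $L_1\in\Subs{A_1}{k_1-j_1}$, introduce the $(\ell-1)$-dimensional slices $g_{I_1}(I_2,\ldots,I_\ell):=g(I_1,I_2,\ldots,I_\ell)$ and $h_{L_1}(L_2,\ldots,L_\ell):=h(L_1,L_2,\ldots,L_\ell)$. A direct expansion of the subset convolution $*_j$ gives
\[
p(J_1,J_2,\ldots,J_\ell)=\sum_{I_1\in\Subs{J_1}{j_1}}(g_{I_1}\,\ast'\,h_{J_1\setminus I_1})(J_2,\ldots,J_\ell),
\]
where $\ast'$ denotes the $(\ell-1)$-dimensional subset convolution with parameters $(j_2,\ldots,j_\ell)$ and $(k_2-j_2,\ldots,k_\ell-j_\ell)$.

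Applying the inductive hypothesis to each $g_{I_1}\,\ast'\,h_{L_1}$ and taking square roots gives
\[
\Bigl(\frac{1}{\prod_{s\geq 2}\binomial{n_s}{k_s}}\sum_{J_2,\ldots,J_\ell}\Bigl(\frac{(g_{I_1}\,\ast'\,h_{L_1})(J_2,\ldots,J_\ell)}{\prod_{s\geq 2}\binomial{k_s}{j_s}}\Bigr)^2\Bigr)^{1/2}\leq G(I_1)\,H(L_1),
\]
where $G(I_1):=\bigl(\prod_{s\geq 2}\binomial{n_s}{j_s}\bigr)^{-1/2}\bigl(\sum_{I_2,\ldots,I_\ell}g(I_1,I_2,\ldots,I_\ell)^2\bigr)^{1/2}$ and $H(L_1)$ is defined analogously on $\Subs{A_1}{k_1-j_1}$. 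The key step is then to fix $J_1$, view $(J_2,\ldots,J_\ell)\mapsto p(J_1,\ldots,J_\ell)/\prod_{s\geq 2}\binomial{k_s}{j_s}$ as an element of the weighted $\ell^2$-space with uniform weight $(\prod_{s\geq 2}\binomial{n_s}{k_s})^{-1}$, and apply Minkowski's inequality to the decomposition above. Combined with the slice bound, this produces the pointwise estimate
\[
\Bigl(\frac{1}{\prod_{s\geq 2}\binomial{n_s}{k_s}}\sum_{J_2,\ldots,J_\ell}\Bigl(\frac{p(J_1,\ldots,J_\ell)}{\prod_{s\geq 2}\binomial{k_s}{j_s}}\Bigr)^2\Bigr)^{1/2}\leq (G\ast_{j_1}H)(J_1),
\]
a genuine one-dimensional $j_1$-subset convolution in the first coordinate.

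The proof is finished by applying Theorem \ref{t983467} to the non-negative functions $G,H$: squaring the pointwise estimate, dividing by $\binomial{k_1}{j_1}^2$ and averaging over $J_1$ with weight $1/\binomial{n_1}{k_1}$, one obtains
\[
\frac{1}{\prod_s\binomial{n_s}{k_s}}\sum_{J_1,\ldots,J_\ell}\Bigl(\frac{p(J_1,\ldots,J_\ell)}{\prod_s\binomial{k_s}{j_s}}\Bigr)^2\leq\frac{1}{\binomial{n_1}{j_1}\binomial{n_1}{k_1-j_1}}\sum_{I_1}G(I_1)^2\sum_{L_1}H(L_1)^2,
\]
and substituting the definitions of $G$ and $H$ recovers exactly the right-hand side of \eqref{e78369888}. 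The main obstacle is the Minkowski step: one has to recognise that the weighted $\ell^2$-norm in the trailing coordinates of each piece $(g_{I_1}\,\ast'\,h_{J_1\setminus I_1})/\prod_{s\geq 2}\binomial{k_s}{j_s}$ factorises as the product $G(I_1)H(J_1\setminus I_1)$, so that the remaining sum over $I_1\in\Subs{J_1}{j_1}$ fits precisely the convolution structure required by Theorem \ref{t983467}. Once this decoupling is identified, the binomial normalisations combine with no loss either at the Minkowski or at the one-dimensional convolution step; a naive Cauchy--Schwarz in place of Theorem \ref{t983467} would, by contrast, produce the factor $\binomial{n_1-j_1}{k_1-j_1}$ instead of $\binomial{n_1}{k_1-j_1}$ and give a weaker bound.
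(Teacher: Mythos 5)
Your proof is correct, and although it shares the paper's overall scheme (induction on $\ell$ with Theorem \ref{t983467} as base case and as the engine of the inductive step), the step itself is organized in a genuinely different way. The paper peels off the \emph{last} coordinate: it expands the square of $p$ as a double sum over $I_\ell,I_\ell'$, decouples the two copies by the Cauchy--Schwarz inequality in the $\ell$-th coordinate, applies Theorem \ref{t983467} there to produce the averaged functions $\widetilde g,\widetilde h$, and only afterwards invokes the induction hypothesis for $\sqrt{\widetilde g},\sqrt{\widetilde h}$ on the remaining $\ell-1$ coordinates. You peel off the \emph{first} coordinate and reverse the order of the two ingredients: the induction hypothesis is applied to the honest slices $g_{I_1}*'h_{L_1}$, Minkowski's inequality in the uniformly weighted $\ell^2$-space over $(J_2,\dots,J_\ell)$ dominates the norm of $p(J_1,\cdot)$ by the one-dimensional convolution $(G*_{j_1}H)(J_1)$, and Theorem \ref{t983467} applied to $G,H$ finishes the argument. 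All normalizations check out: the slice bound is exactly $G(I_1)H(J_1\setminus I_1)$, the triangle inequality is legitimate (the summands are non-negative anyway), and $\frac{1}{\binomial{n_1}{j_1}}\sum_{I_1}G(I_1)^2$ collapses to the first factor on the right of \eqref{e78369888}, likewise for $H$. Your closing remark is also accurate: a crude Cauchy--Schwarz over $I_1\in\Subs{J_1}{j_1}$ in place of the final application of Theorem \ref{t983467} would yield the denominator $\binomial{n_1}{k_1}\binomial{k_1}{j_1}=\binomial{n_1}{j_1}\binomial{n_1-j_1}{k_1-j_1}$, which is strictly weaker, so the full one-dimensional theorem is indeed needed in the outer coordinate. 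The two arguments are of comparable length; yours has the mild advantage of applying the induction hypothesis verbatim to slices of $g$ and $h$ rather than to the derived functions $\sqrt{\widetilde g},\sqrt{\widetilde h}$, while the paper's avoids introducing the normed-space viewpoint needed for the Minkowski step.
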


\begin{proof}[Proof of Theorem \ref{t26376566}]
Let $T_d$ denote the left-hand side of the inequality in 
\eqref{e65463}. We now use induction over $d$ to show that 
\begin{align*}
T_d
&\leq \prod_{r=1}^d\Big(
\frac{1}{\prod_{s=1}^\ell\binomial{n_s}{w_{r,s}}}
\sum_{J_1\in\Subs{A_1}{w_{r,1}}}\dots
\sum_{J_\ell\in\Subs{A_\ell}{w_{r,\ell}}}
\abs{g_r(J_1,\dots,J_\ell)}^2\Big).
\end{align*}
For $d=1$, we have $w(s)=w_{1,s}=k_s$ 
for all $s\in\set{\ell}$, 
$R(g,J_1,\dots,J_\ell)=g_1(J_1,\dots,J_\ell)$ for 
$J_s\in\Subs{A_s}{k_s}$, $(s\in\set{\ell})$, and hence 
\begin{align*}
T_1
&=\frac{1}{\prod_{s=1}^\ell\binomial{n_s}{w_{1,s}}}
\sum_{J_1\in\Subs{A_1}{w_{1,1}}}\dots
\sum_{J_\ell\in\Subs{A_\ell}{w_{1,\ell}}} 
\abs{g_1(J_1,\dots,J_\ell)}^2.
\end{align*}
In the proof of the assertion for $d\in\NN\setminus\set{1}$,
we assume its validity for $d-1$. From \eqref{e672587}, we obtain  
that, for $J_s\in\Subs{A_s}{k_s}$, $(s\in\set{\ell})$, 
\begin{align} 
\lefteqn{\Big(\prod_{s=1}^\ell \frac{w(s)!}{k_s!}\Big)
R(g,J_1,\dots,J_\ell)}\nonumber\\
&=\frac{1}{\prod_{s=1}^\ell\binomial{k_s}{w_{d,s}}}
\sum_{V_{d,1}\in\Subs{J_1}{w_{d,1}}}\dots
\sum_{V_{d,\ell}\in\Subs{J_\ell}{w_{d,\ell}}} 
g_d(V_{d,1},\dots,V_{d,\ell})\nonumber\\
&\quad{}\times
\Big(\prod_{s=1}^\ell
\frac{\widetilde{w}(s)!}{(k_s-w_{d,s})!}\Big)
\sum_{\newatop{(V_{1,1},\dots,V_{d-1,1})}{
\in\Part(J_1\setminus V_{d,1},\widetilde{w}(1))}}
\dots\sum_{\newatop{(V_{1,\ell},\dots,V_{d-1,\ell})}{
\in\Part(J_\ell\setminus V_{d,\ell},\widetilde{w}(\ell))}}
\prod_{r=1}^{d-1}g_r(V_{r,1},\dots,V_{r,\ell}),\label{e987576}
\end{align}
where $\widetilde{w}(s)=(w_{1,s},\dots,w_{d-1,s})
\in\Comp(k_s-w_{d,s},d-1)$ for all $s\in\set{\ell}$. 
Let $\widetilde{g}=(g_1,\dots,g_{d-1})$ and 
\begin{align}
h(J_1,\dots,J_\ell)
&=\Big(\prod_{s=1}^\ell
\frac{\widetilde{w}(s)!}{(k_s-w_{d,s})!}\Big)
\widetilde{R}(\widetilde{g},J_1,\dots,J_\ell),\label{e652147}\\
\widetilde{R}(\widetilde{g},J_1,\dots,J_\ell)
&=\sum_{\newatop{(V_{1,1},\dots,V_{d-1,1})}{
\in\Part(J_1,\widetilde{w}(1))}}
\dots\sum_{\newatop{(V_{1,\ell},\dots,V_{d-1,\ell})}{
\in\Part(J_\ell,\widetilde{w}(\ell))}}
\prod_{r=1}^{d-1}g_r(V_{r,1},\dots,V_{r,\ell})\label{e652148}
\end{align}
for $J_s\in\Subs{A_s}{k_s-w_{d,s}}$. By \eqref{e987576}, we get
\begin{align*}
\lefteqn{\Big(\prod_{s=1}^\ell \frac{w(s)!}{k_s!}\Big)
R(g,J_1,\dots,J_\ell)}\nonumber\\
&=\frac{1}{\prod_{s=1}^\ell\binomial{k_s}{w_{d,s}}}
\sum_{I_1\in\Subs{J_1}{w_{d,1}}}\dots
\sum_{I_\ell\in\Subs{J_\ell}{w_{d,\ell}}} g_d(I_1,\dots,I_\ell)
h(J_1\setminus I_1,\dots,J_\ell\setminus I_\ell),
\end{align*}
which together with Theorem \ref{c72566} implies
\begin{align*}
T_d
&=\frac{1}{\prod_{s=1}^\ell\binomial{n_s}{k_s}}
\sum_{J_1\in\Subs{A_1}{k_1}}\dots\sum_{J_\ell\in\Subs{A_\ell}{k_\ell}}
\ABS{\Big(\prod_{s=1}^\ell \frac{w(s)!}{k_s!}\Big)
R(g,J_1,\dots,J_\ell)}^2\\
&=\frac{1}{\prod_{s=1}^\ell\binomial{n_s}{k_s}}
\sum_{J_1\in\Subs{A_1}{k_1}}\dots\sum_{J_\ell\in\Subs{A_\ell}{k_\ell}}
\ABS{\frac{1}{\prod_{s=1}^\ell\binomial{k_s}{w_{d,s}}}\\
&\quad{}\times
\sum_{I_1\in\Subs{J_1}{w_{d,1}}}\dots
\sum_{I_\ell\in\Subs{J_\ell}{w_{d,\ell}}} g_d(I_1,\dots,I_\ell)
h(J_1\setminus I_1,\dots,J_\ell\setminus I_\ell)}^2\\
&\leq \Big(\frac{1}{\prod_{s=1}^\ell\binomial{n_s}{w_{d,s}}}
\sum_{I_1\in\Subs{A_1}{w_{d,1}}}
\dots\sum_{I_\ell\in\Subs{A_\ell}{w_{d,\ell}}}
\abs{g_d(I_1,\dots,I_\ell)}^2\Big)\\
&\quad{}\times
\Big(\frac{1}{\prod_{s=1}^\ell\binomial{n_s}{k_s-w_{d,s}}}
\sum_{J_1\in\Subs{A_1}{k_1-w_{d,1}}}
\dots\sum_{J_\ell\in\Subs{A_\ell}{k_\ell-w_{d,\ell}}}
\abs{h(J_1,\dots,J_\ell)}^2\Big).
\end{align*}
In view of \eqref{e652147} and \eqref{e652148}, we see that 
the assertion now follows by applying the induction hypothesis
to the second factor above.
\end{proof} 

\section{Remaining proofs} \label{s82576}
\subsection{Proofs of Lemmata \ref{l486369} and \ref{l324579}}
\label{s2876563}
The following remark contains some arguments needed in the proofs 
below. 
\begin{remark}\label{r28659}
Let $J,K$ be non-empty sets with $\card{J}=\card{K}=k$.
\begin{enumerate}\itemsep-2pt

\item \label{r28659.a}
Let $j\in J_{\neq}^K$, $W\subseteq K$, and $M$ be a set with 
$\card{M}=\card{W}$. For every $t\in(j[W])_{\neq}^M$, there is 
exactly one $t'\in W_{\neq}^M$ such that $t=j|_W\circ t'$, and vice 
versa. Here and henceforth, $j[W]=\{j_s\,|\,s\in W\}$ is the image of 
$W$ under the map $j$ and $j|_W:\,W\longrightarrow j[W]$ is the 
restriction of $j$ to $W$. In particular, we have 
$(j[W])_{\neq}^M=\{j|_W\circ t'\;|\;t'\in W_{\neq}^M\}$. 

\item \label{r28659.b} Let $t\in K_{\neq}^K$. 
For every $j\in J_{\neq}^K$, there is exactly one $j'\in J_{\neq}^K$
such that $j=j'\circ t$, and vice versa. In particular, we have 
$J_{\neq}^K=\{j'\circ t\,|\,j'\in J_{\neq}^K\}$.

\item \label{r28659.c}
Let $d\in\NN$, $w\in\Comp(k,d)$, $(W_1,\dots,W_d)\in\Part(K,w)$, 
and $V_1,\dots,V_d\in2^J$. Then we have  
$V=(V_1,\dots,V_d)\in\Part(J,w)$ if and only if 
a map $j\in J_{\neq}^K$ exists such that $V_r=j[W_r]$ for all 
$r\in\set{d}$. Clearly, in this case there are $w!$ of such 
maps~$j$. Consequently,  
$\Part(J,w)=\{(j[W_1],\dots,j[W_d])\,|\,j\in J_{\neq}^K\}$ and 
every sum over $(V_1,\dots,V_d)\in\Part(J,w)$ can be written as a 
sum over $j\in J_{\neq}^K$ divided by $w!$, where the sets 
$V_1,\dots,V_d$ have to be replaced with $j[W_1],\dots,j[W_d]$, 
respectively.

\end{enumerate}
\end{remark}


\begin{proof}[Proof of Lemma \ref{l486369}]
For all $j^{(1)}\in (J_1)_{\neq}^K,\dots,
j^{(\ell)}\in (J_\ell)_{\neq}^K$ and $r\in\set{d}$, \eqref{e8612766}
and Remark \ref{r28659}(\ref{r28659.a}) give 
\begin{align}
\per_\ell(Z[j^{(1)}[W_r],\dots,j^{(\ell)}[W_r],W_r])
&=\sum_{t^{(1)}\in (j^{(1)}[W_r])_{\neq}^{W_r}}\dots
\sum_{t^{(\ell)}\in (j^{(\ell)}[W_r])^{W_r}_{\neq}}
\prod_{s\in W_r}z(t_s^{(1)},\dots,t_s^{(\ell)},s)\nonumber\\
&=\sum_{t^{(1)},\dots,t^{(\ell)}\in (W_r)_{\neq}^{W_r}}
\prod_{s\in W_r}z(j^{(1)}(t_s^{(1)}),\dots,
j^{(\ell)}(t_s^{(\ell)}),s).\label{e9727799}
\end{align}
For arbitrary $t^{(r,1)},\dots,t^{(r,\ell)}\in (W_r)_{\neq}^{W_r}$, 
$(r\in\set{d})$, we obtain from \eqref{e8612766} and 
Remark \ref{r28659}(\ref{r28659.b}) that
\begin{align}
\per_\ell(Z)
&=\sum_{j^{(1)}\in (J_1)_{\neq}^K}\dots
\sum_{j^{(\ell)}\in (J_\ell)^K_{\neq}}
\prod_{s\in K}z(j_s^{(1)},\dots,j_s^{(\ell)},s)\nonumber\\
&=\sum_{j^{(1)}\in (J_1)_{\neq}^K}\dots\sum_{j^{(\ell)}\in 
(J_\ell)^K_{\neq}}\prod_{r=1}^d
\Big(\prod_{s\in W_r}z(j_s^{(1)},\dots,j_s^{(\ell)},s)\Big)\nonumber\\
&=\sum_{j^{(1)}\in (J_1)_{\neq}^{K}}\dots
\sum_{j^{(\ell)}\in (J_\ell)_{\neq}^{K}} 
\prod_{r=1}^d\Big(\prod_{s\in W_r}z(j^{(1)}(t_s^{(r,1)}),\dots,
j^{(\ell)}(t_s^{(r,\ell)}),s)\Big).\label{e15790}
\end{align}
Using Remark \ref{r28659}(\ref{r28659.c}), \eqref{e9727799}, and 
\eqref{e15790}, we get
\begin{align*}
\lefteqn{\sum_{\newatop{(V_{1,1},\dots,V_{d,1})}{\in\Part(J_1,w)}}\dots
\sum_{\newatop{(V_{1,\ell},\dots,V_{d,\ell})}{\in\Part(J_\ell,w)}}
\prod_{r=1}^d\per_\ell(Z[V_{r,1},\dots,V_{r,\ell},W_r])}\\
&=\frac{1}{(w!)^\ell}\sum_{j^{(1)}\in (J_1)_{\neq}^{K}}\dots
\sum_{j^{(\ell)}\in (J_\ell)_{\neq}^{K}} 
\prod_{r=1}^d\per_\ell(Z[j^{(1)}[W_r],\dots,j^{(\ell)}[W_r],W_r])\\
&=\frac{1}{(w!)^\ell}\sum_{j^{(1)}\in (J_1)_{\neq}^{K}}\dots
\sum_{j^{(\ell)}\in (J_\ell)_{\neq}^{K}}\prod_{r=1}^d\Big(
\sum_{t^{(r,1)},\dots,t^{(r,\ell)}\in (W_r)_{\neq}^{W_r}}
\prod_{s\in W_r}z(j^{(1)}(t_s^{(r,1)}),\dots,
j^{(\ell)}(t_s^{(r,\ell)}),s)\Big)\\
&=\frac{1}{(w!)^\ell}
\sum_{t^{(1,1)},\dots,t^{(1,\ell)}\in (W_1)_{\neq}^{W_1}}
\dots\sum_{t^{(d,1)},\dots,t^{(d,\ell)}\in (W_d)_{\neq}^{W_d}}\\
&\quad{}\times
\sum_{j^{(1)}\in (J_1)_{\neq}^{K}}\dots
\sum_{j^{(\ell)}\in (J_\ell)_{\neq}^{K}}
\prod_{r=1}^d\Big(\prod_{s\in W_r}z(j^{(1)}(t_s^{(r,1)}),\dots,
j^{(\ell)}(t_s^{(r,\ell)}),s)\Big)\\
&=\per_\ell(Z),
\end{align*}
which shows \eqref{e7821967}. The left-hand side in \eqref{e7821967}
does not depend on $(W_1,\dots,W_d)$. 
Summing up over $(W_1,\dots,W_d)\in\Part(K,w)$ and dividing 
by $\card{\Part(K,w)}=\frac{k!}{w!}$, \eqref{e7821968} is shown. 
\end{proof}


\begin{proof}[Proof of Lemma \ref{l324579}]
Let $n=\ell k$ and $(W_1,\dots,W_d)\in\Part(\set{n},\ell w)$.  
It suffices to show that 
\begin{align}\label{e514232}
H_\ell(Z)=\sum_{j\in J_{\neq}^{n}}\prod_{r=1}^d
\Big(\frac{1}{(\ell w_r)!}H_\ell(Z[j[W_r],\dots,j[W_r]])\Big),
\end{align}
where $H_\ell(Z)=k!(\ell!)^k\haf_\ell(Z)$. 
In fact, this together with Remark \ref{r28659}(\ref{r28659.c}) 
implies that 
\begin{align*}
\haf_\ell(Z)
&=\frac{1}{k!(\ell!)^k}H_\ell(Z)
=\frac{(\ell w)!}{k!(\ell!)^k}
\sum_{\newatop{(V_1,\dots,V_d)}{\in\Part(J,\ell w)}}\prod_{r=1}^d
\Big(\frac{1}{(\ell w_r)!}H_\ell(Z[V_r,\dots,V_r])\Big)\\
&=\frac{(\ell w)!}{k!(\ell!)^k}
\sum_{\newatop{(V_1,\dots,V_d)}{\in\Part(J,\ell w)}}\prod_{r=1}^d
\Big(\frac{w_r!(\ell!)^{w_r}}{(\ell w_r)!}\haf_\ell(Z[V_r,
\dots,V_r])\Big)\\
&=\frac{w!}{k!}\sum_{\newatop{(V_1,\dots,V_d)}{\in\Part(J,\ell w)}}
\prod_{r=1}^d\haf_\ell(Z[V_r,\dots,V_r]).
\end{align*}
Let $M=(M_1,\dots,M_d)\in\Part(\set{k},w)$ and 
\begin{align*}
\widetilde{M}_r
=\{\ell(s-1)+i\,|\,s\in M_r,i\in\set{\ell}\} \quad \mbox{for } 
r\in\set{d}.
\end{align*}
It is clear that 
$(\widetilde{M}_1,\dots,\widetilde{M}_d)\in\Part(\set{n},\ell w)$. 
If $t^{(r)}\in (W_r)_{\neq}^{\widetilde{M}_r}$ for all 
$r\in\set{d}$ and $t\in\set{n}_{\neq}^{n}$ with 
\begin{align*}
t(\ell(s-1)+1)=t^{(r)}(\ell(s-1)+1),\dots,t(\ell(s-1)+\ell)
=t^{(r)}(\ell(s-1)+\ell)
\end{align*}
for all $r\in\set{d}$ and $s\in M_r$, then Remark 
\ref{r28659}(\ref{r28659.b}) gives
\begin{align}
H_\ell(Z)
&=\sum_{j\in J_{\neq}^{n}}\prod_{r=0}^{k-1} 
z(j(r\ell+1),\dots,j(r\ell+\ell))\nonumber\\
&=\sum_{j\in J_{\neq}^{n}}\prod_{r=1}^{k} 
z(j(t(\ell(r-1)+1)),\dots,j(t(\ell(r-1)+\ell)))\nonumber\\
&=\sum_{j\in J_{\neq}^{n}}\prod_{r=1}^d
\Big(\prod_{s\in M_r}z(j(t(\ell(s-1)+1)),\dots,j(t(\ell(s-1)+\ell)))
\Big)\nonumber\\
&=\sum_{j\in J_{\neq}^{n}}\prod_{r=1}^d\Big(
\prod_{s\in M_r} z(j(t^{(r)}(\ell(s-1)+1)),
\dots,j(t^{(r)}(\ell(s-1)+\ell)))\Big). \label{e7819858}
\end{align}
For $r\in\set{d}$ and  $j\in J_{\neq}^{n}$, 
we get from Remark \ref{r28659}(\ref{r28659.a}) that
\begin{align}
H_\ell(Z[j[W_r],\dots,j[W_r]])
&=w_r!(\ell!)^{w_r}\haf_\ell(Z[j[W_r],\dots,j[W_r]])\nonumber\\
&=\sum_{t\in (j[W_r])_{\neq}^{\ell w_r}}\prod_{s=1}^{w_r} 
z(t(\ell(s-1)+1),\dots,t(\ell(s-1)+\ell))\nonumber\\
&=\sum_{t\in (W_r)_{\neq}^{\ell w_r}}\prod_{s=1}^{w_r} 
z(j(t(\ell(s-1)+1)),\dots,j(t(\ell(s-1)+\ell)))\nonumber\\
&=\sum_{t\in (W_r)_{\neq}^{\widetilde{M}_r}}
\prod_{s\in M_r} z(j(t(\ell(s-1)+1)),\dots,j(t(\ell(s-1)+\ell))).
\label{e618759}
\end{align}
Therefore, using \eqref{e618759} and \eqref{e7819858}, we obtain
\begin{align*}
\lefteqn{\sum_{j\in J_{\neq}^{n}}\prod_{r=1}^d
\Big(\frac{1}{(\ell w_r)!}H_\ell(Z[j[W_r],\dots,j[W_r]])\Big)}\\
&=\sum_{j\in J_{\neq}^{n}}\prod_{r=1}^d
\Big(\frac{1}{(\ell w_r)!}\sum_{t\in (W_r)_{\neq}^{\widetilde{M}_r}}
\prod_{s\in M_r} z(j(t(\ell(s-1)+1)),\dots,j(t(\ell(s-1)+\ell)))
\Big)\\
&=\frac{1}{(\ell w)!}\sum_{t^{(1)}\in (W_1)_{\neq}^{\widetilde{M}_1}}
\dots\sum_{t^{(d)}\in (W_d)_{\neq}^{\widetilde{M}_d}}\\
&\quad{}\times \sum_{j\in J_{\neq}^{n}}\prod_{r=1}^d\Big(
\prod_{s\in M_r} z(j(t^{(r)}(\ell(s-1)+1)),\dots,
j(t^{(r)}(\ell(s-1)+\ell)))\Big)\\
&=H_\ell(Z),
\end{align*}
which implies \eqref{e514232}. \qedhere
\end{proof}

\subsection{Proofs of Theorems \ref{t983467}, \ref{c72566}, and
Remark \ref{r28759}} \label{s1534215}

\begin{proof}[Proof of Theorem \ref{t983467}]
Let us first mention some simple facts. 
If $j=0$ or $j= k$, then in \eqref{e717659} equality holds. 
Further, in the case $k=n$, \eqref{e717659}  
easily follows from the Cauchy-Schwarz inequality.
For the proof of \eqref{e717659}, we use induction over $n$. 
The observations above imply the validity of the assertion
in the case $n\in\set{2}$. 
In the proof of the assertion for general $n\in\NN\setminus\set{2}$, 
we assume its validity for $n-1$. We may assume that 
$0<j< k<n$.  Let $m\in A$ be fixed and set $A'=A\setminus\{m\}$. 
For $J\in\Subs{A'}{k-1}$, let 
\begin{align*} 
p_1(J)=\sum_{I\in\Subs{J}{j-1}}g(I\cup\{m\})h(J\setminus I),\quad 
p_2(J)=\sum_{I\in\Subs{J}{j}}g(I)h((J\setminus I)\cup\{m\}). 
\end{align*}
Then we have 
\begin{align*}
p(J\cup\{m\})
&=\sum_{I\in\Subs{J\cup\{m\}}{j}}g(I)
h((J\cup\{m\})\setminus I)\\
&=\sum_{I\in\Subs{J}{j-1}}g(I\cup\{m\})h(J\setminus I)
+\sum_{I\in\Subs{J}{j}}g(I)h((J\cup\{m\})\setminus I)
=p_1(J)+p_2(J)
\end{align*}
for all $J\in\Subs{A'}{k-1}$. Using the Minkowski inequality, we obtain
\begin{align*}
T&:=\sum_{J\in\Subs{A}{k}}p(J)^2
=\sum_{J\in\Subs{A'}{k}}p(J)^2
+\sum_{J\in\Subs{A'}{k-1}}p(J\cup\{m\})^2\\
&=\sum_{J\in\Subs{A'}{k}}p(J)^2
+\sum_{J\in\Subs{A'}{k-1}}(p_1(J)+p_2(J))^2\\
&\leq \sum_{J\in\Subs{A'}{k}}p(J)^2
+\Big(\Big(\sum_{J\in\Subs{A'}{k-1}}p_1(J)^2\Big)^{1/2}
+\Big(\sum_{J\in\Subs{A'}{k-1}}p_2(J)^2\Big)^{1/2}
\Big)^2.
\end{align*}
Now let 
\begin{gather*}
a=\sum_{I\in\Subs{A'}{j}}g(I)^2,\quad 
b=\sum_{J\in\Subs{A'}{k-j}}h(J)^2,\\
c=\sum_{I\in\Subs{A'}{j-1}}g(I\cup\{m\})^2,\quad
d=\sum_{J\in\Subs{A'}{k-1-j}}h(J\cup\{m\})^2.
\end{gather*}
In particular, we have 
\begin{align*}
a+c=\sum_{I\in\Subs{A}{j}}g(I)^2,\quad
b+d=\sum_{J\in\Subs{A}{k-j}}h(J)^2.
\end{align*}
The induction hypothesis implies that 
\begingroup
\allowdisplaybreaks
\begin{gather}
\sum_{J\in\Subs{A'}{k}}p(J)^2
\leq\frac{\binomial{n-1}{k}\binomial{k}{j}^2}{
\binomial{n-1}{j}\binomial{n-1}{k-j}}ab
=\frac{(n- k)\binomial{n}{k}\binomial{k}{j}^2}{n
\binomial{n-1}{j}\binomial{n-1}{k-j}}ab,\label{346244}\\
\sum_{J\in\Subs{A'}{k-1}}p_1(J)^2
\leq \frac{\binomial{n-1}{k-1}\binomial{k-1}{j-1}^2}{
\binomial{n-1}{j-1}\binomial{n-1}{k-j}}bc
=\frac{j(n-j)\binomial{n}{k}\binomial{k}{j}^2}{k n
\binomial{n-1}{j}\binomial{n-1}{k-j}}bc,\label{346245}\\
\sum_{J\in\Subs{A'}{k-1}}p_2(J)^2
\leq \frac{\binomial{n-1}{k-1}\binomial{k-1}{j}^2}{
\binomial{n-1}{j}\binomial{n-1}{k-1-j}}ad
=\frac{( k-j)(n- k+j)
\binomial{n}{k}\binomial{k}{j}^2}{k n
\binomial{n-1}{j}\binomial{n-1}{k-j}}ad.\label{346246}
\end{gather}
\endgroup
Hence  
\begin{align*}
\frac{T}{\binomial{n}{k}\binomial{k}{j}^2}
&\leq \frac{1}{\binomial{n}{k}\binomial{k}{j}^2}\Big(
\sum_{J\in\Subs{A'}{k}}p(J)^2
+\Big(\Big(\sum_{J\in\Subs{A'}{k-1}}p_1(J)^2\Big)^{1/2}
+\Big(\sum_{J\in\Subs{A'}{k-1}}p_2(J)^2
\Big)^{1/2}\Big)^2\Big)\\
&\leq \frac{1}{k n\binomial{n-1}{j}\binomial{n-1}{k-j}}
\big( k(n- k)ab+
\big(\sqrt{j(n-j)bc}+\sqrt{( k-j)(n- k+j)ad}\big)^{2}\big)\\
&=\frac{1}{k n\binomial{n-1}{j}\binomial{n-1}{k-j}}
\big( k(n- k)ab+j(n-j)bc+( k-j)(n- k+j)ad+u\big),
\end{align*}
where $u=2\sqrt{j(n-j)( k-j)(n- k+j)abcd}$. 
Now let
\begin{align*}
v=j( k-j)ab +(n-j)(n- k+j)cd,\\
w=( k-j)(n-j)bc +j(n- k+j)ad.
\end{align*}
By using the inequality $2xy\leq x^2+y^2$ for $x,y\in[0,\infty)$,
we derive
\begin{align*}
u&\leq \min\{v,w\}
\leq\frac{k}{n} v+\frac{n- k}{n} w.
\end{align*}
Since all three terms 
\begin{gather*}
k(n-k)+\frac{k}{n}j( k-j), \quad
j(n-j)+\frac{n-k}{n}(k-j)(n-j), \quad\mbox{and}\quad  \\
(k-j)(n-k+j)+\frac{n-k}{n}j(n-k+j)
\end{gather*}
are equal to 
$\frac{k}{n}(n-j)(n- k+j)$, 
we obtain 
\begin{align*}
\frac{T}{\binomial{n}{k}\binomial{k}{j}^2}
&\leq\frac{1}{k n\binomial{n-1}{j}\binomial{n-1}{k-j}}
\big( k(n- k)ab+j(n-j)bc+( k-j)(n- k+j)ad+u\big)\\
&\leq\frac{1}{k n\binomial{n-1}{j}\binomial{n-1}{k-j}}
\frac{k}{n}(n-j)(n- k+j) (ab+bc+ad+cd)\\
&=\frac{1}{\binomial{n}{j}\binomial{n}{k-j}}(a+c)(b+d)
=\Big(\frac{1}{\binomial{n}{j}}
\sum_{I\in\Subs{A}{j}}g(I)^2
\Big)\Big(\frac{1}{\binomial{n}{k-j}}
\sum_{J\in\Subs{A}{k-j}}h(J)^2\Big).\qedhere
\end{align*}
\end{proof}
\begin{proof}[Proof of  Remark \ref{r28759} (Sufficiency).]
It is easily shown that, if one of the conditions 
\ref{r28759.i}--\ref{r28759.v} is valid, 
then equality in \eqref{e717659} holds. It is noteworthy that, if 
$k=n$, then equality in \eqref{e717659} is equivalent to 
the existence of a number $x\in[0,\infty)$  such that 
$g(I)=xh(A\setminus I)$ for all $I\in\Subs{A}{j}$. 
\end{proof}


\begin{proof}[Proof of Remark \ref{r28759} (Necessity).]
We now use induction over $n$ to show that equality in 
\eqref{e717659} implies one of the conditions 
\ref{r28759.i}--\ref{r28759.v}.  For $n=1$, 
the assertion is clear. In the proof for $n\in\NN\setminus\{1\}$, we 
assume the validity of the assertion for $n-1$. 
Let us further assume that, in \eqref{e717659}, equality holds and 
that the conditions \ref{r28759.i}--\ref{r28759.iv} do no hold. 
Then we have $1\leq j<k<n$ and there are $I_0\in\Subs{A}{j}$ and 
$J_0\in\Subs{A}{k-j}$ such that $g(I_0)>0$ and $h(J_0)>0$. The aim is 
to show that \ref{r28759.v} holds. We note  that \eqref{e717659} 
remains the same if we interchange $(g,j)$ and $(h,k-j)$. 
Therefore, if $j=1$ or $j=k-1$, then the 
assertion follows from \citet[Remark~3.1]{MR3916882}. 
So let us additionally assume 
that $2\leq j\leq k-2$. In particular, $n\geq 5$.
Because of the property observed above and since 
$2\leq k-j\leq k-2$, it suffices to show that $g(I)=g(I')$ for all 
$I,I'\in\Subs{A}{j}$. 

In view of the proof of Theorem \ref{t983467}, we see that, for all 
$m\in A$, equality holds in \eqref{346244} and \eqref{346245}, that is
\begin{align}
\sum_{J\in\Subs{A\setminus\{m\}}{k}}p(J)^2
&=\frac{\binomial{n-1}{k}\binomial{k}{j}^2}{
\binomial{n-1}{j}\binomial{n-1}{k-j}}
\Big(\sum_{I\in\Subs{A\setminus\{m\}}{j}}g(I)^2\Big) 
\sum_{J\in\Subs{A\setminus\{m\}}{k-j}}h(J)^2,\label{e1245}\\
\lefteqn{\sum_{J\in\Subs{A\setminus\{m\}}{k-1}}
\Big(\sum_{I\in\Subs{J}{j-1}}g(I\cup\{m\})h(J\setminus I)\Big)^2}
\hspace{3cm}\nonumber\\
&=\frac{\binomial{n-1}{k-1}\binomial{k-1}{j-1}^2}{
\binomial{n-1}{j-1}\binomial{n-1}{k-j}} 
\Big(\sum_{I\in\Subs{A\setminus\{m\}}{j-1}}g(I\cup\{m\})^2\Big)
\sum_{J\in\Subs{A\setminus\{m\}}{k-j}}h(J)^2.\label{e1246}
\end{align}
We note, that \eqref{346246} is not needed  here. In what follows, 
we consider two cases. 
\begin{enumerate}

\item 
Let us first assume that $k<n-1$. 
\begin{enumerate}[(i)]

\item \label{p2356_a.i}
The induction hypothesis and \eqref{e1245} imply 
that, if $m\in A$, $I,I'\in\Subs{A\setminus\{m\}}{j}$, 
$J\in\Subs{A\setminus\{m\}}{k-j}$, $g(I)>0$, and $h(J)>0$, 
then we have $g(I)=g(I')$. 
Here, we had to use condition \ref{r28759.v} of Remark \ref{r28759}, 
since $j\notin\{0,k\}$ and $k<n-1$.

\item Let $I\in\Subs{A}{j}$. We now show that $g(I)=g(I_0)$. 
We note that $\card{A\setminus (I_0\cup J_0)}\geq n-k\geq2$. 
Let $m_1,m_2\in A\setminus (I_0\cup J_0)$ with $m_1\neq m_2$.
 
If there is an $r\in\set{2}$ with $m_r\notin I$, then \ref{p2356_a.i} 
implies that $g(I)=g(I_0)$, since 
$I,I_0\in\Subs{A\setminus\{m_r\}}{j}$, 
$J_0\in\Subs{A\setminus\{m_r\}}{k-j}$, $g(I_0)>0$, and $h(J_0)>0$. 

Let us now assume that $m_1,m_2\in I$. 
Since $\card{A\setminus (J_0\cup I)}\geq 2$, there is 
an $m_3\in A\setminus (J_0\cup I)$. 

If $m_3\notin I_0$, then \ref{p2356_a.i} implies that $g(I)=g(I_0)$
since $I,I_0\in\Subs{A\setminus\{m_3\}}{j}$, 
$J_0\in\Subs{A\setminus\{m_3\}}{k-j}$, 
$g(I_0)>0$, and $h(J_0)>0$.

Let us now assume that $m_3\in I_0$, that is $m_3\neq m_1$. 
Let $I_0'=(I_0\setminus \{m_3\})\cup\{m_1\}$. 
Then \ref{p2356_a.i} implies that $g(I_0')=g(I_0)>0$, since 
$I_0',I_0\in\Subs{A\setminus\{m_2\}}{j}$, 
$J_0\in\Subs{A\setminus\{m_2\}}{k-j}$, $g(I_0)>0$, and $h(J_0)>0$. 
Further, we get $g(I)=g(I_0')=g(I_0)$, since 
$I,I_0'\in\Subs{A\setminus\{m_3\}}{j}$, 
$J_0\in\Subs{A\setminus\{m_3\}}{k-j}$, $g(I_0')>0$, and $h(J_0)>0$. 

Combining the different cases above, we obtain $g(I)=g(I_0)$.

\end{enumerate}

\item
Let us now assume that $k=n-1$. 
\begin{enumerate}[(i)]

\item \label{p2356_b.i}
The induction hypothesis and \eqref{e1245} imply that, 
if $m\in A$, $I\in\Subs{ A\setminus\{m\}}{j}$, $g(I)>0$, and 
$J=(A\setminus\{m\})\setminus I$, then $h(J)>0$. In fact, a number 
$x_m\in[0,\infty)$ exists such that $g(I)=x_mh(J)$. Here condition 
\ref{r28759.iv} of Remark \ref{r28759} has been used.  

\item \label{p2356_b.ii} 
The induction hypothesis and \eqref{e1246}  imply 
that, if $m\in A$, $I,I'\in\Subs{A}{j}$ with 
$m\in I\cap I'\neq\emptyset$, 
$J\in \Subs{A\setminus\{m\}}{k-j}$, $g(I)>0$, and $h(J)>0$, 
then we have $g(I)=g(I')$. 
Here, condition \ref{r28759.v} of Remark \ref{r28759} has bee used, 
since $j-1\notin\{0,k-1\}$ and $k-1<n-1$.

\item \label{p2356_b.iii}
From \ref{p2356_b.i} and \ref{p2356_b.ii}, it follows that, 
if $I,I'\in\Subs{A}{j}$ with $I\cap I'\neq\emptyset$ and
$g(I)>0$, then we have $g(I)=g(I')$. Indeed, since 
$\card{A\setminus I}= n-j\geq n-(k-2)=3$, there is an 
$m_1\in A\setminus I$; from \ref{p2356_b.i} we get $h(J)>0$ for 
$J=(A\setminus\{m_1\})\setminus I$. Let $m_2\in I\cap I'$. 
Then $J\in\Subs{A\setminus \{m_2\}}{k-j}$ and 
\ref{p2356_b.ii} implies $g(I)=g(I')$. 

\item \label{p2356_b.iv}
Since $g(I_0)>0$, \ref{p2356_b.iii} implies 
that $g(I)=g(I_0)$ for all $I\in\Subs{A}{j}$ with 
$I\cap I_0\neq\emptyset$. 

\item It remains to show that 
$g(I)=g(I_0)$ for $I\in\Subs{A}{j}$ with $I\cap I_0=\emptyset$.
Let $m_1\in I_0$, $m_2\in I$, and set 
$I'=(I_0\setminus\{m_1\})\cup\{m_2\}\in\Subs{A}{j}$. 
Since $\card{I_0}=j\geq 2$, we have  
$I'\cap I_0\neq\emptyset$, and \ref{p2356_b.iv} gives $g(I')=g(I_0)>0$.
Because of $I'\cap I\neq\emptyset$ and $g(I')>0$, \ref{p2356_b.iv}
implies that $g(I)=g(I')=g(I_0)$. \qedhere
\end{enumerate}
\end{enumerate}
\end{proof} 

\begin{proof}[Proof of Theorem \ref{c72566}]
We use induction over $\ell$. For $\ell=1$, the assertion follows from 
Theorem \ref{t983467}. In the proof of the assertion for 
$\ell\in\NN\setminus\set{1}$, we assume that the assertion for 
$\ell-1$ is valid. 
Let $T$ denote the left-hand side of the inequality in 
\eqref{e78369888}, that is
\begin{align*}
T
&=\frac{1}{\prod_{s=1}^\ell (\binomial{n_s}{k_s}
\binomial{k_s}{j_s}^2)}
\sum_{J_1\in\Subs{A_1}{k_1}}
\sum_{I_1,I_1'\in\Subs{J_1}{j_1}}
\dots
\sum_{J_{\ell -1}\in\Subs{A_{\ell -1}}{k_{\ell -1}}}
\sum_{I_{\ell -1},I_{\ell -1}'\in\Subs{J_{\ell -1}}{j_{\ell -1}}}\\
&\quad{}\times
\sum_{J_\ell \in\Subs{A_\ell }{k_\ell }}
\sum_{I_\ell \in\Subs{J_\ell }{j_\ell }}
g(I_1,\dots,I_\ell )h(J_1\setminus I_1,\dots,J_\ell\setminus I_\ell)\\
&\quad{}\times
\sum_{I_\ell '\in\Subs{J_\ell }{j_\ell }}g(I_1',\dots,I_\ell ')
h(J_1\setminus I_1',\dots,J_\ell\setminus I_\ell ').
\end{align*}
Using the Cauchy-Schwarz inequality, we obtain  
\begin{align*}
T
&\leq \frac{1}{\prod_{s=1}^{\ell -1}(\binomial{n_s}{k_s}
\binomial{k_s}{j_s}^2)}
\sum_{J_1\in\Subs{A_1}{k_1}}
\sum_{I_1,I_1'\in\Subs{J_1}{j_1}}\dots
\sum_{J_{\ell -1}\in\Subs{A_{\ell -1}}{k_{\ell -1}}}
\sum_{I_{\ell -1},I_{\ell -1}'\in\Subs{J_{\ell -1}}{j_{\ell -1}}}\\
&\quad{}\times\Big(\frac{1}{\binomial{n_\ell }{k_\ell }}
\sum_{J_\ell \in\Subs{A_\ell }{k_\ell }}
\Big(\frac{1}{\binomial{k_\ell }{j_\ell }}
\sum_{I_\ell \in\Subs{J_\ell }{j_\ell }}
g(I_1,\dots,I_\ell )
h(J_1\setminus I_1,\dots,J_\ell \setminus I_\ell )\Big)^2\Big)^{1/2}\\
&\quad{}\times\Big(\frac{1}{\binomial{n_\ell }{k_\ell }}
\sum_{J_\ell \in\Subs{A_\ell }{k_\ell }}
\Big(\frac{1}{\binomial{k_\ell }{j_\ell }}
\sum_{I_\ell '\in\Subs{J_\ell }{j_\ell }}
g(I_1',\dots,I_\ell ')
h(J_1\setminus I_1',\dots,J_\ell \setminus I_\ell ')\Big)^2\Big)^{1/2}.
\end{align*}
Theorem \ref{t983467} implies that 
\begin{align*}
T&\leq \frac{1}{\prod_{s=1}^{\ell -1}(\binomial{n_s}{k_s}
\binomial{k_s}{j_s}^2)}
\sum_{J_1\in\Subs{A_1}{k_1}}
\sum_{I_1,I_1'\in\Subs{J_1}{j_1}}\dots
\sum_{J_{\ell -1}\in\Subs{A_{\ell -1}}{k_{\ell -1}}}
\sum_{I_{\ell -1},I_{\ell -1}'\in\Subs{J_{\ell -1}}{j_{\ell -1}}}\\
&\quad{}\times
\big(\widetilde{g}(I_1,\dots,I_{\ell -1})
\widetilde{h}(J_1\setminus I_1,\dots,J_{\ell -1}\setminus I_{\ell -1})
\big)^{1/2}\\
&\quad{}\times\big(\widetilde{g}(I_1',\dots,I_{\ell -1}')
\widetilde{h}(J_1\setminus I_1',\dots,J_{\ell -1}
\setminus I_{\ell -1}')
\big)^{1/2},
\end{align*}
where, for $I_s\in\Subs{A_s}{j_s}$ and
$J_s\in\Subs{A_s}{k_s-j_s}$, 
$(s\in\set{\ell -1})$,
\begin{align*}
\widetilde{g}(I_1,\dots,I_{\ell -1})
&=\frac{1}{\binomial{n_\ell }{j_\ell }}
\sum_{I_\ell \in\Subs{A_\ell }{j_\ell }}g(I_1,\dots,I_{\ell-1},
I_\ell)^2,\\
\widetilde{h}(J_1,\dots,J_{\ell -1})
&=\frac{1}{\binomial{n_\ell }{k_\ell -j_\ell }}
\sum_{J_\ell \in\Subs{A_\ell }{k_\ell -j_\ell }}
h(J_1,\dots,J_{\ell -1},J_\ell )^2.
\end{align*}
Equivalently, we have 
\begin{align*}
T&\leq \frac{1}{\prod_{s=1}^{\ell -1}\binomial{n_s}{k_s}}
\sum_{J_1\in\Subs{A_1}{k_1}}\dots 
\sum_{J_{\ell -1}\in\Subs{A_{\ell -1}}{k_{\ell -1}}}
\Big(\frac{1}{\prod_{s=1}^{\ell -1}\binomial{k_s}{j_s}}\\
&\qquad\qquad{}\times
\sum_{I_1\in\Subs{J_1}{j_1}}\dots
\sum_{I_{\ell -1}\in\Subs{J_{\ell -1}}{j_{\ell -1}}}
\sqrt{\widetilde{g}(I_1,\dots,I_{\ell -1})
\widetilde{h}(J_1\setminus I_1,\dots,
J_{\ell -1}\setminus I_{\ell -1})}\Big)^2.
\end{align*}
The assertion now follows with the help of the induction hypothesis.
\end{proof}
\apptocmd{\sloppy}{\hbadness 10000\relax}{}{} 

\small 
\let\oldbibliography\thebibliography
\renewcommand{\thebibliography}[1]{\oldbibliography{#1}
\setlength{\itemsep}{0.7ex plus0.5ex minus0.7ex}
} 
\linespread{1.1}
\selectfont
\bibliography{nph_92}
 
\end{document}